\documentclass[11pt,leqno]{amsart}

\topmargin -1.2cm \evensidemargin 0cm \oddsidemargin 0cm \textwidth 16cm \textheight 22cm

\usepackage{amscd}
\usepackage{color}
\usepackage{amssymb}
\usepackage{amsfonts,dsfont}
\usepackage{latexsym}
\usepackage{url}
\usepackage{verbatim}

\usepackage[T1]{fontenc}

\usepackage[hypertexnames=false,
backref=page,
    pdfpagemode=UseNone,
    breaklinks=true,
    extension=pdf,
    colorlinks=true,
    linkcolor=blue,
    citecolor=blue,
    urlcolor=blue,
]{hyperref}

\linespread{1.2}

\renewcommand{\Im}{\mathsf{Im}\,}

\theoremstyle{plain}
\newtheorem{theorem}{Theorem}[section]
\newtheorem*{theorem*}{Theorem}
\newtheorem{definition}[theorem]{Definition}

\newtheorem{prop}[theorem]{Proposition}
\newtheorem{cor}[theorem]{Corollary}

\theoremstyle{definition}
\newtheorem{rem}[theorem]{Remark}
\newtheorem{ex}[theorem]{Example}

\allowdisplaybreaks[1]

\title{Isometric immersions of locally conformally K\"ahler manifolds}

\author{Daniele Angella}
\address[Daniele Angella]{
 Dipartimento di Matematica e Informatica ``Ulisse Dini''\\
 Universit\`a degli Studi di Firenze\\
 viale Morgagni 67/a\\
 50134 Firenze, Italy
}
\email{daniele.angella@gmail.com}
\email{daniele.angella@unifi.it}

\author{Michela Zedda}
\address[Michela Zedda]{Dipartimento di Scienze Matematiche, Fisiche ed Informatiche\\
Plesso Matematico e Informatico\\
Universit\`a  di Parma\\
Parco Area delle Scienze 53/A\\
43124 Parma, Italy
}
\email{michela.zedda@unipr.it}
\email{michela.zedda@gmail.com}

\keywords{locally conformally K\"ahler, isometric immersion, Calabi's diastasis function, Hopf manifold, compact complex surface}
\thanks{The first-named author has been partially supported by Project PRIN "Variet\'a  reali e complesse: geometria, topologia e analisi armonica", by Project SIR2014 "Analytic aspects in complex and hypercomplex geometry" code RBSI14DYEB, and by the group GNSAGA of INdAM.
The second-named author has been partially supported by the group GNSAGA of INdAM}
\subjclass[2010]{53B35, 53C55, 32H02, 53A30}

\date{\today}

\begin{document}

\begin{abstract}
We investigate isometric immersions of locally conformally K\"ahler metrics into Hopf manifolds. In particular, we study Hopf-induced metrics on compact complex surfaces.
\end{abstract}

\maketitle

\section{Introduction}
The celebrated Kodaira Embedding Theorem gives geometric and cohomological conditions under which analytic geometry reduces to algebraic geometry. In fact, it gives a holomorphic embedding for Hodge manifolds (namely, K\"ahler manifolds whose K\"ahler class is rational or, equivalently, admitting a holomorphic line bundle with positive curvature) into some projective space; and then the Serre GAGA applies.
In general, such an embedding is not isometric: {\itshape e.g.} there are no complex curves of constant negative curvature in $\mathds{C}{\rm P}^n$. This latter issue occurs also in $\mathds C^n$, where moreover the maximum principle does not allow to embed compact complex manifolds.
When the Kodaira canonical maps are isometric embeddings, or more in general when we have a holomorphic isometric immersion into $\mathds{C}{\rm P}^n$ endowed with the Fubini-Study metric, then the metric is called {\em projectively-induced}.
In \cite{tian-jdg} Tian proved that projectively-induced K\"ahler metrics are dense in the space of polarized K\"ahler metrics in the $\mathcal{C}^2$-topology, later generalized by \cite{ruan} to the $\mathcal C^\infty$-topology. This means that K\"ahler metrics on Hodge manifolds can be approximated by projectively-induced metrics.

The problem of which (real analytic) K\"ahler manifolds admit an isometric immersion into $\mathds{C}{\rm P}^n$, or more in general into complex space forms (that is, finite or infinite dimensional K\"ahler manifolds of constant holomorphic sectional curvature) has been studied by E. Calabi \cite{calabi-ann}.
Here, the complete simply connected complex space forms, according to the sign of the holomorphic sectional curvature, are the complex Euclidean space $\mathds C^N$ endowed with its canonical flat metric, the complex projective space $\mathds{C}{\rm P}^N$ with the Fubini-Study metric, and the complex hyperbolic space $\mathds{C}{\rm H}^N$ with the hyperbolic metric, where $N\in\mathds{N}\cup\{\infty\}$.
More precisely, E. Calabi introduces the {\em diastasis function} (as a symmetrization of a polarization of a local real analytic K\"ahler potential) and gives an algebraic criterion for the immersion in terms of the coefficients of the power series expansion of (some function of) it. We refer to \cite{loi-zedda} for an updated panorama on the subject.

Inspired by E. Calabi's work, we would like to study isometric immersions in the {\em locally conformally K\"ahler} (lcK from now on) setting, see {\itshape e.g.} \cite{dragomir-ornea, ornea-survey}.
We recall that an lcK structure on a complex manifold $M$ is given by a Hermitian $(1,1)$-form $\omega$ and a closed $1$-form, called Lee form, such that $d\omega=\theta\wedge\omega$. On some covering $\tilde M$, the Lee form becomes exact, {\itshape i.e.} there exists a potential $h$ such that it yields $\tilde\theta=dh$, and then $\tilde{\omega}:=e^{-h}\omega$ is a K\"ahler metric on $\tilde M$, and the deck transformation group acts by homotheties.
So, lcK geometry can be interpreted as an "equivariant (homothetic) K\"ahler geometry" or a first specific non-K\"ahler setting (see Gray-Hervella classification).
LcK metrics play a role on compact complex surfaces (see {\itshape e.g.} \cite{vaisman-polito, belgun, brunella, pontecorvo} and references therein): with the exception of Inoue surfaces, every known compact complex surfaces admit lcK metrics; and in any case, a covering admitting a K\"ahler metric exists. 

In the lcK context, the analogue of the projective space is played by {\em Hopf manifolds} $H^n$, that is, manifolds whose universal covering is $\mathds C^n \setminus \{0\}$ and whose fundamental group equals the infinite cyclic group $\mathds{Z}$, see \cite{belgun, gauduchon-ornea}.
An analogue of the Kodaira embedding in this setting has been proven by L. Ornea and M. Verbitsky \cite{ornea-verbitsky-MathAnn, ornea-verbitsky-MathAnn2, ornea-verbitsky-JGP}. More precisely, they proved the folowing:
{\itshape let $(M,\omega,\theta)$ be an lcK manifold of $\dim_{\mathds C} M \geq 3$. If it has a proper positive lcK potential, then there exists a holomorphic embedding into a linear Hopf manifold $(H^n,\omega_H,\theta_0)$.}
Here, by lcK potential they mean that a K\"ahler covering has a potential on which the deck transformations act with the same homothety factors as on the metric; the potential is assumed to be  proper, compare with \cite{ornea-verbitsky-JGP}.
Vaisman manifolds correspond to an action of the Hopf deck transformations generated by a semisimple element.

For our purpose, we consider an $N$-dimensional classical Hopf manifold $\left(H^N_\lambda,\omega_H, \theta_0\right)$, with $H^N_\lambda=(\mathds C^N\setminus\{0\})/\mathds{Z}$, $\omega_H=\|z\|^{-2}\omega_0$, and $\theta_0=d\log\|z\|^{-2}$, where $\mathds Z$ is generated by $\lambda\cdot \mathrm{id}$ for $\lambda\in\mathds{C}\setminus\{0\}$ with $|\lambda|\neq1$ and we denote by $\omega_0$ the $(1,1)$-form associated to the flat metric on $\mathds C^N$. Notice that  $N$ is allowed to be infinite and, when it is, we mean that the universal covering is $l^2(\mathds C)$.
We say that an lcK manifold $(M, \omega, \theta)$ admits {a (global)} {\em lcK immersion} into $(H^N_\lambda,\omega_H,\theta_0)$ if there exists a holomorphic map $f\colon M\to H^N_\lambda$ such that $f^*\omega_H=\omega$.
{In this case, we say that $\omega$ is \emph{Hopf-induced}.}
As a consequence of the Calabi local rigidity (see \cite{calabi-ann} or next section), when an lcK immersion exists it is unique up to unitary transformations of the ambient space, see Prop. \ref{prop:uniqueness}.

{
A straightforward obstruction to the existence of an lcK immersion is that Hopf-induced metrics are with proper potential and so they have the diffeomorphism type of Vaisman manifolds, (see Prop. \ref{prop:obstruction} and Rem. \ref{rem:obstruction}).
}
On the other hand, {an lcK manifold} $(M,\omega)$ is Hopf-induced if and only if a K\"ahler covering is induced by an immersion into $l^2(\mathds C)$ that preserves both the metric and the Lee form (see Prop. \ref{prop:equivalent-defi}). Applying Calabi's techniques, we are able to write such conditions in terms of the diastasis function (see \cite{calabi-ann} or next section for definitions and details). More precisely we have: 

\begin{theorem}[see Prop. \ref{prop:equivalent-defi-2}]\label{main}
Let $(M,\omega)$ be {an lcK manifold with proper potential}.
Then $(M,\omega)$ is Hopf-induced if the following conditions are fulfilled:
\begin{enumerate}
\item a K\"ahler covering $(\tilde M,\tilde\omega)$ of $(M,\omega)$ is induced by an immersion into $l^2(\mathds C)$;
\item the continuous extension of the diastasis function to the one--point completion of $(\tilde M,\tilde\omega)$ is an automorphic potential.
\end{enumerate}
\end{theorem}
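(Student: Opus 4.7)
The plan is to invoke Prop.\ \ref{prop:equivalent-defi}, which reduces Hopf-inducedness to exhibiting an immersion $\tilde f\colon\tilde M\to l^{2}(\mathds C)$ of the K\"ahler covering that preserves both the metric and the Lee form. Condition (1) already provides such an $\tilde f$ with $\tilde f^{*}\omega_{0}=\tilde\omega$, so the whole task is to show that, after an affine normalization of the target, $\tilde f$ also pulls back the Lee form of the Hopf covering to $\tilde\theta$.

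The first step is Calabi's identification of the diastasis with the ambient Hermitian distance: for any $p\in\tilde M$ one has $D(z,p)=\|\tilde f(z)-\tilde f(p)\|^{2}$. By condition (2) the diastasis extends continuously to a point $*$ adjoined to $\tilde M$; applying a translation of the target (which is a unitary affine transformation and hence preserves everything by Calabi's rigidity), one may arrange $\tilde f(*)=0\in l^{2}(\mathds C)$, so that
\[
\phi(z):=D(z,*)=\|\tilde f(z)\|^{2}
\]
is a global real-analytic K\"ahler potential for $\tilde\omega$ on $\tilde M$. The automorphy asserted in (2) reads $\gamma^{*}\phi=c_{\gamma}\phi$ for every deck transformation $\gamma$, where $c_{\gamma}>0$ is the homothety factor of $\gamma$ on $\tilde\omega$.

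The second step is the identity $\tilde\theta=-d\log\phi$, which is the standard compatibility between a proper automorphic K\"ahler potential and the Lee form of an lcK metric (see \cite{ornea-verbitsky-JGP}). In our setting it can be derived directly: from $\tilde\omega=e^{-h}\omega$ and $\gamma^{*}\tilde\omega=c_{\gamma}\tilde\omega$ one deduces $\gamma^{*}h=h-\log c_{\gamma}$, while automorphy gives $\gamma^{*}(-\log\phi)=-\log\phi-\log c_{\gamma}$. Hence $h+\log\phi$ is deck-invariant and descends to $M$, and the properness assumption together with the continuous extension to $*$ (which forces $\phi\to 0$ at $*$) pins this difference down to an additive constant, whence $\tilde\theta=dh=-d\log\phi$.

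Combining the two steps, the computation
\[
\tilde f^{*}\theta_{0}=\tilde f^{*}\bigl(-d\log\|z\|^{2}\bigr)=-d\log\|\tilde f(z)\|^{2}=-d\log\phi=\tilde\theta
\]
shows that $\tilde f$ preserves the Lee form, so by Prop.\ \ref{prop:equivalent-defi} we conclude that $(M,\omega)$ is Hopf-induced. The principal obstacle is the second step: rigidifying $\phi$ against $h$ so that $h=-\log\phi+\mathrm{const}$ holds globally rather than merely up to a deck-invariant pluriharmonic function. Both functions transform identically under the deck group by the computation above, so their difference descends to $M$; the subtle point is to combine the properness of the potential with the extension of the diastasis to $*$ to eliminate the residual pluriharmonic freedom on $M$.
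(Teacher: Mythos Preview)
Your route is exactly the paper's: invoke Prop.~\ref{prop:equivalent-defi}, use Calabi's hereditary property to identify the extended diastasis with $\phi=\|\tilde f\|^{2}$ after translating the image so that the completion point maps to $0\in l^{2}(\mathds C)$, and then check the Lee-form condition $\tilde\theta=-d\log\phi$. The paper dispatches this last implication in one line (``Clearly, if $\tilde\omega=dd^{c}\Phi$ has automorphic potential $\Phi=D_{p}^{\tilde M_{c}}$, then $\tilde\theta=-d\log D_{p}^{\tilde M_{c}}$''); you, more honestly, single it out as the principal obstacle and sketch a justification via properness and the boundary behaviour $\phi\to0$ at~$*$.

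That sketch does not go through under your reading of ``automorphic'' as the bare equivariance $\gamma^{*}\phi=c_{\gamma}\phi$. You correctly show that $h+\log\phi$ descends to $M$, but nothing pins it down to a constant: on $M=H^{2}_{\lambda}$, replace $\omega_{H}$ by $\omega'=e^{g}\omega_{H}$ for any non-constant $g\in C^{\infty}(M)$. This is still lcK with proper potential, its K\"ahler covering is still $(\mathds C^{2}\setminus\{0\},\omega_{0})$ with the \emph{same} extended diastasis $\phi=\|z\|^{2}$, so your conditions~(1) and the equivariance in~(2) hold; yet $h'+\log\phi=g$ is non-constant and one checks directly that $(M,\omega')$ is not Hopf-induced. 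So no combination of properness and the extension to~$*$ can eliminate the residual freedom you identified. The paper is in fact using condition~(2) in the stronger lcK-potential sense $\omega=\phi^{-1}dd^{c}\phi$ (equivalently $\omega=\phi^{-1}\tilde\omega$), as the ``Conversely'' clause in the proof of Prop.~\ref{prop:equivalent-defi-2} makes explicit. Under that reading, $e^{h}\tilde\omega=\omega=\phi^{-1}\tilde\omega$ gives $h=-\log\phi$ tautologically, and the remainder of your argument is correct. The fix is therefore not to complete your properness argument, but to take ``automorphic potential'' in this intended sense.
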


Notice that in complex dimension two, one actually has to assume the Spherical Shell conjecture to hold true (see Sect. \ref{lcKnot} for notation and details).

We further observe that in the lcK context an analogue of Tian's Approximation Theorem recalled above does not hold true, as we prove in Prop. \ref{prop:non-tian}.

In the second part of this note, we focus on non-K\"ahler compact complex surfaces. The only ones admitting Vaisman metrics are either with non-negative Kodaira dimension or diagonal Hopf surfaces, as proven by F. Belgun \cite{belgun}. As for existence of lcK metrics, assuming the Spherical Shell conjecture to hold true, the situation is known thanks to \cite{belgun, brunella} and references therein. As observed by \cite{brunella, pontecorvo}, any compact complex surface admits a covering which is K\"ahler. In Sect. \ref{sec:surfaces}, we study lcK immersions for the Hermitian compact complex surfaces appearing in the literature \cite{gauduchon-ornea, parton-Ampa, belgun, tricerri, cordero-fernandez-deleon, vaisman-polito, tricerri}, we apply Theorem \ref{main} to the Vaisman ones and we also study the existence of a K\"ahler immersion of the covering when they are not Hopf-induced. In particular, linear Hopf surfaces with the Gauduchon-Ornea metric provides the only examples of lcK immersion we have, and only when the eigenvalues of the generator of the action group {coincide}.
Properly elliptic surfaces with the Vaisman metric by Belgun do not admit an lcK immersion, but they do at the level of the K\"ahler covering.
Similarly, also both the Vaisman metric on the Kodaira surfaces, and the (non-Vaisman) lcK metric by Tricerri on the Inoue-Bombieri surfaces are induced by K\"ahler metric immersed into $l^2(\mathds C)$, but they are not Hopf-induced.

The paper is organized as follows. In the next section we recall the definition of diastasis function and summarize what we need about Calabi's work on K\"ahler immersions of K\"ahler manifolds into the complex Euclidean space. In Sect. \ref{lcKiso} we state equivalent definitions of lcK immersions, and study their properties in relation with the diastasis function and the Lee form. We also investigate approximation of Vaisman manifolds by Hopf-induced metrics. Finally, Sect. \ref{sec:surfaces} is devoted to the case of lcK surfaces described above.

\bigskip

\noindent{\sl Acknowledgments.}
The authors are very grateful to Andrea Loi and Liviu Ornea for the interest in their work and for all the stimulating conversations.
{
We would also like to thank the anonymous referees for having pointed out some mistakes in a previous version and for their very useful suggestions.
}

\section{K\"ahler immersions of K\"ahler manifolds into the complex Euclidean space}
In the groundbreaking work \cite{calabi-ann}, E. Calabi states an algebraic criterion to test the existence of a K\"ahler immersion of a real analytic K\"ahler manifold into a finite or infinite dimensional complex space form, {\itshape i.e.} a simply connected K\"ahler manifold with constant holomorphic sectional curvature. We summarize in this section his criterion in the case when the ambient space is the complex Euclidean space endowed with the flat metric $g_0$, that will be the case we consider in the next sections.
Recall that, when the dimension is infinite, one takes $l^2(\mathds C)$ as ambient space. For details about K\"ahler immersions of K\"ahler manifolds into complex space forms and related issues, we refer the reader to \cite{loi-zedda} and references therein.

Consider a K\"ahler manifold $(M,g)$ and let $\Phi\colon U\to \mathds{R}$ be a K\"ahler potential for $g$, {\itshape i.e.} if we denote by $\omega$ the K\"ahler form associated to $g$, then $\omega|_U=\frac{\sqrt{-1}}2\partial\bar\partial \Phi$. Fix local coordinates $z$ on $U$. Since the pull-back of the flat metric $g_0$ through a holomorphic map must be a real analytic K\"ahler metric, we take $g$ to be real analytic. Then, we can duplicate the variables $z$ and $\bar z$ and consider the analytic continuation $\tilde \Phi(z, \bar z')$ of $\Phi(z,\bar z)=\Phi(z)$ in a neighbourhood $W$ of the diagonal $\Delta\subset U\times \bar U$. The {\em diastasis function} for $g$ on $W$ is given by:
$$
D(z,z')=\tilde \Phi(z, \bar z)+\tilde \Phi(z', \bar z')-\tilde \Phi(z, \bar z')-\tilde \Phi(z', \bar z).
$$
\begin{ex}\label{diastflat}\rm
The diastasis of the flat metric $g_0$ on $\mathds{C}^N$, $N\leq+\infty$, reads:
$$
D^0(z,z')=\|z-z'\|^2.
$$
\end{ex}
Once one of its two entries is fixed, the diastasis is a K\"ahler potential for $g$ with the following key property:
\begin{prop}[Hereditary property {\cite[Prop. 6]{calabi-ann}}]\label{induceddiast}
Let $(M,g)$ be a real analytic K\"ahler manifold and let $f\colon U\to \mathds{C}^N$, $N\leq+\infty$, be a K\"ahler map, {\itshape i.e.} $f^*g_0=g$. Then, if we denote by $D^0$ the diastasis function of $g_0$ and by $D$ that of $g$, we have:
$$
D(z,z')=D^0(f(z), f(z')).
$$
\end{prop}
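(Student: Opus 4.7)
The plan is to exploit the fundamental feature of the diastasis: among all (germs of) K\"ahler potentials for $g$ near a base point, the diastasis singles out a canonical one, and in particular it does not depend on which analytic potential one starts from, as long as one feeds the associated sesqui-analytic extension into the defining formula. So the strategy is first to verify this invariance, and then to evaluate the diastasis on the pullback potential $f^\ast \Phi^0$.

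First I would note that if $\Phi,\Phi'$ are two real analytic K\"ahler potentials for $g$ on $U$, then $\Phi - \Phi' = h + \bar h$ for some holomorphic $h$ (by the local $\partial\bar\partial$-lemma for real analytic functions). Passing to the sesqui-analytic continuations near the diagonal, this becomes
\[
\tilde\Phi(z,\bar z') - \tilde{\Phi'}(z,\bar z') \;=\; h(z) + \overline{h(z')}.
\]
Plugging this into the four-term combination that defines $D$, the purely holomorphic contributions $h(z), h(z')$ each appear exactly once with sign $+$ and once with sign $-$, and likewise for the antiholomorphic contributions $\overline{h(z)}, \overline{h(z')}$; hence the diastasis is unchanged. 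Thus $D$ may be computed using any convenient K\"ahler potential for $g$.

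Next I would use that $f\colon U \to \mathds{C}^N$ is holomorphic with $f^\ast g_0 = g$. Since pullback commutes with $\partial\bar\partial$ for holomorphic maps,
\[
\omega \;=\; f^\ast\omega_0 \;=\; \frac{\sqrt{-1}}{2}\,\partial\bar\partial\,\bigl(f^\ast\Phi^0\bigr) \;=\; \frac{\sqrt{-1}}{2}\,\partial\bar\partial\,\|f(z)\|^2,
\]
so $\Phi := \|f(z)\|^2 = \sum_i f_i(z)\overline{f_i(z)}$ is a real analytic K\"ahler potential for $g$ on $U$. Its sesqui-analytic continuation in a diagonal neighbourhood is obtained by replacing the antiholomorphic argument $\bar z$ with an independent $\bar z'$, giving
\[
\tilde\Phi(z,\bar z') \;=\; \sum_i f_i(z)\,\overline{f_i(z')},
\]
(convergence of this series, in the case $N = \infty$, is guaranteed because $f$ takes values in $l^2(\mathds{C})$ and the map is holomorphic).

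Finally, I would directly expand the defining formula:
\begin{align*}
D(z,z') &= \tilde\Phi(z,\bar z) + \tilde\Phi(z',\bar z') - \tilde\Phi(z,\bar z') - \tilde\Phi(z',\bar z) \\
&= \sum_i \Bigl[ f_i(z)\overline{f_i(z)} + f_i(z')\overline{f_i(z')} - f_i(z)\overline{f_i(z')} - f_i(z')\overline{f_i(z)} \Bigr] \\
&= \sum_i \bigl(f_i(z) - f_i(z')\bigr)\,\overline{\bigl(f_i(z) - f_i(z')\bigr)} \;=\; \|f(z) - f(z')\|^2,
\end{align*}
which by Example \ref{diastflat} equals $D^0(f(z),f(z'))$, as desired. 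The main conceptual step is the invariance of $D$ under the pluriharmonic ambiguity of K\"ahler potentials; once this is in place, the computation is essentially bookkeeping. In the infinite-dimensional setting the only genuinely delicate point is the convergence of the sesqui-analytic extension $\sum_i f_i(z)\overline{f_i(z')}$ on a diagonal neighbourhood, which follows from the $l^2$-summability of $(f_i)$ together with a uniform estimate on compact subsets.
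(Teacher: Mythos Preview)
The paper does not supply its own proof of this proposition: it is simply quoted from Calabi's original article \cite[Prop.~6]{calabi-ann}, so there is no in-paper argument to compare against. Your proof is correct and is in fact the standard one; it is also precisely the argument behind the paper's Remark~\ref{charact}, which records the very invariance property you establish in your first step.
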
 
Let us denote by $D_0(z)=D(z,0)$ the diastasis for $g$ centered at the origin of the chosen coordinate system and let $(a_{jk})$ be the matrix of coefficients in the power series expansion:
\begin{equation}\label{diastexp}
D_0(z)=\sum_{j,k=1}^\infty a_{jk}z^{m_j}\bar z^{m_k}.
\end{equation}
where the $m_j$s are $n$-uples of integers, $n$ being the dimension of $M$, arranged in lexicographic order. 
\begin{rem}\label{charact}
In general two K\"ahler potentials differ by the addition of the real part of a holomorphic function, {\itshape i.e.} if $\varphi$ is a K\"ahler potential and $h$ is a holomorphic function, then $\varphi'=\varphi+h+\bar h$ is a K\"ahler potential as well. Among the other K\"ahler potentials, the diastasis function is characterized by presenting in each coordinates system a power expansion \eqref{diastexp} with $a_{0k}=a_{j0}=0$, or in other words, it does not have terms in $z$ or $\bar z$ alone.
\end{rem}
\begin{definition}\rm
A K\"ahler manifold $(M,g)$ is {\em resolvable of rank $N$} at $p\in M$ if, in a coordinates system $z$ centered at $p$, the matrix $(a_{jk})$ is semipositive definite of rank $N$. 
\end{definition}
Calabi proved that if a K\"ahler manifold is resolvable of rank $N$ at $p$ then it also is of the same rank at any other point {\cite[Th. 4]{calabi-ann}}. The resolvability of a real analytic K\"ahler manifold $(M,g)$ is actually equivalent to the existence of a local K\"ahler immersion. Calabi's criterion can be stated as follows:

\begin{theorem}[Calabi's local criterion {\cite[Th. 3]{calabi-ann}}]\label{localcriterion}
A K\"ahler manifold $(M,g)$ admits a local K\"ahler immersion of a neighborhood $U$ of a point $p$ into $\mathds{C}^N$, $N\leq +\infty$, if and only if $(M,g)$ is resolvable of rank at most $N$ at $p$. Further, if the rank is exactly $N$, then the immerson is {\em full}, {\itshape i.e.} the image of $U$ is not contained in any totally geodesic complex submanifold of $\mathds{C}^N$.
\end{theorem}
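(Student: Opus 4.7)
The plan is to use the hereditary property (Proposition \ref{induceddiast}) together with the explicit form $D^0(z,z') = \|z-z'\|^2$ of the flat diastasis (Example \ref{diastflat}) to convert the geometric existence of an immersion into the algebraic existence of a Gram-matrix factorization $A = C^* C$ of the diastasis coefficient matrix $(a_{jk})$. The rank of $C$ will simultaneously control how many ambient coordinates are needed and whether the immersion is full.

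For the ``only if'' direction, suppose $f\colon U\to \mathds{C}^N$ satisfies $f^* g_0 = g$. After composing with a translation of $\mathds{C}^N$ (a Kähler isometry), I may assume $f(p)=0$, and I choose coordinates $z$ centered at $p$. Proposition \ref{induceddiast} then gives $D_0(z) = D(z,0) = \|f(z)\|^2$. Expanding each component as a convergent power series $f_j(z) = \sum_k c_{jk}\, z^{m_k}$ with $c_{j0}=0$ and identifying coefficients with the expansion \eqref{diastexp}, I read off $a_{jk} = \sum_l c_{lj}\,\overline{c_{lk}}$, i.e.\ $A = C^*C$. This presents $A$ as semipositive definite of rank equal to $\mathrm{rank}(C) \leq N$, hence $(M,g)$ is resolvable of rank at most $N$ at $p$.

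For the ``if'' direction, suppose $(a_{jk})$ is semipositive definite of rank $N' \leq N$. I extract a factorization $A = C^* C$ with $C$ having $N'$ rows: in finite dimension this is standard linear algebra; in the infinite-dimensional case I invoke the spectral theorem to take a square root of the positive operator. Define $f_j(z) := \sum_k c_{jk}\, z^{m_k}$ for $j=1,\dots,N'$, padded by zeros if $N'<N$, so that formally $\|f(z)\|^2 = D_0(z)$. The technical heart is convergence: absolute convergence of $D_0$ on a polydisk of radius $r$ gives $\sum_k a_{kk}\, r^{2|m_k|} = \sum_{j,k}|c_{jk}|^2 r^{2|m_k|} < \infty$, and a Cauchy--Schwarz estimate against the convergent sum $\sum_k s^{2|m_k|}$ ($s<1$) then controls $\sum_k |c_{jk}|\,|z|^{|m_k|}$ uniformly in $j$ for $|z| \leq rs$. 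Hence $f$ is a genuine holomorphic map into $\mathds{C}^N$ (or into $l^2(\mathds{C})$ when $N=\infty$). Finally, $\|f(z)\|^2$ is a Kähler potential for $f^* g_0$ and $D_0$ is a Kähler potential for $g$; since they coincide as real analytic functions (not only up to pluriharmonic terms), I conclude $f^*g_0 = g$.

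For fullness when the rank is exactly $N$, the totally geodesic complex submanifolds of $(\mathds{C}^N, g_0)$ are precisely the complex-affine subspaces, so containment of $f(U)$ in a proper such subspace would, using $f(p)=0$, yield a nonzero linear functional $\ell$ with $\ell\circ f \equiv 0$. Matching power-series coefficients produces a nontrivial left null vector of $C$, forcing $\mathrm{rank}(A) = \mathrm{rank}(C^*C) < N$, a contradiction. The main obstacle throughout is the infinite-dimensional convergence step: one must carefully justify both the spectral extraction of a square root of the coefficient matrix and the fact that the resulting sequences $(c_{jk})_k$ define holomorphic functions taking values in $l^2(\mathds{C})$ on a common neighborhood of $p$.
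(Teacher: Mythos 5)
The paper does not prove this statement: it is quoted verbatim from Calabi \cite[Th.~3]{calabi-ann}, and only the surrounding machinery (the flat diastasis $D^0(z,z')=\|z-z'\|^2$, the hereditary property, the coefficient matrix $(a_{jk})$) is recalled. Your argument is, in substance, Calabi's original proof built on exactly those ingredients, and its architecture is sound: the ``only if'' direction via $D_0(z)=\|f(z)\|^2$ and the Gram identity $a_{jk}=\sum_l c_{lj}\overline{c_{lk}}$ is correct (modulo the routine justification that rearranging the double series and matching coefficients is legitimate by absolute convergence and uniqueness of the expansion in $z,\bar z$); the Cauchy--Schwarz estimate in the ``if'' direction is the right one and, since $\sum_j|c_{jk}|^2=a_{kk}$, it does give convergence in $l^2(\mathds C)$ locally uniformly, hence holomorphy; and the fullness argument via a left null vector of $C$ is correct, since totally geodesic complex submanifolds of flat space are (closed) complex-affine subspaces. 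The one step I would not accept as written is the extraction of the factorization $A=C^*C$ in the infinite-rank case ``by the spectral theorem'': the infinite matrix $(a_{jk})$ has no reason to define a bounded (or even densely defined self-adjoint) operator on $\ell^2$ --- its diagonal entries can grow like $r^{-2|m_k|}$ --- so the spectral theorem does not apply. The correct and elementary substitute is the Gram/completion construction: put the semi-inner product $\langle e_j,e_k\rangle:=a_{jk}$ on the free span of symbols $e_k$ (positive semidefinite because all finite sections of $A$ are), quotient by null vectors, complete, and read off $c_{lk}$ as the coordinates of $[e_k]$ in an orthonormal basis; the resulting Hilbert space has dimension equal to the rank, and your convergence estimate then applies verbatim to any such factorization. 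With that replacement your proof is complete and coincides with the classical one the paper cites.
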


Further, when a K\"ahler immersion exists, it is unique up to unitary transformation of the ambient space:

\begin{theorem}[Calabi's rigidity {\cite[Th. 2]{calabi-ann}}]\label{Calabirig}
Let $f_1, f_2 :(M, g)\rightarrow \mathds C^N$, $N\leq+\infty$, be two full K\"ahler immersions.
Then there exists a unitary transformation $U$ of $\mathds C^N$ such that $f_2=U\circ f_1$.
\end{theorem}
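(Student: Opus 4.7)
The strategy is to exploit the hereditary property of the diastasis (Proposition \ref{induceddiast}) to encode both $f_1$ and $f_2$ as two sequences of vectors in the ambient space sharing the same Gram matrix, and then to produce the unitary transformation $U$ via fullness. I would begin by fixing $p\in M$, choosing local coordinates $z$ centered at $p$, and (after composing with translations of $\mathds C^N$, which are ambient isometries and can be absorbed at the end) assuming that $f_1(p)=f_2(p)=0$. By the hereditary property together with Example \ref{diastflat}, the diastasis $D_0$ of $g$ centered at $p$ satisfies
$$
D_0(z) \,=\, \|f_1(z)\|^2 \,=\, \|f_2(z)\|^2.
$$

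Next, expanding each immersion in its power series $f_i(z)=\sum_{j\geq 1} f_j^{(i)}\, z^{m_j}$ with coefficients $f_j^{(i)}\in\mathds C^N$ (no constant term since $f_i(p)=0$), inserting into the identity above, and comparing with the expansion \eqref{diastexp} of $D_0$ --- using Remark \ref{charact} to exclude pure $z^{m_j}$ or $\bar z^{m_k}$ terms --- yields
$$
\bigl\langle f_j^{(1)},\, f_k^{(1)}\bigr\rangle \,=\, a_{jk} \,=\, \bigl\langle f_j^{(2)},\, f_k^{(2)}\bigr\rangle
$$
for every $j,k\geq 1$. Thus the two families of coefficient vectors have the same Gram matrix, namely the resolvability matrix of $(M,g)$ at $p$.

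At this point I would invoke fullness: for each $i$, $f_i(M)$ is not contained in any proper complex affine subspace of $\mathds C^N$, which (since $f_i(p)=0$) means that the $\mathds C$-linear span of $\{f_j^{(i)}\}_{j\geq 1}$ is dense in $\mathds C^N$. The coincidence of Gram matrices then implies that the assignment $f_j^{(1)}\mapsto f_j^{(2)}$ extends by linearity to an isometry between the two dense spans, hence by continuity to a unitary transformation $U$ of $\mathds C^N$. By construction $U\circ f_1$ and $f_2$ have the same power series at $p$; real-analytic continuation on the connected manifold $M$ then propagates $U\circ f_1=f_2$ globally.

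The main technical obstacle is the infinite-dimensional setting $N=\infty$, where the ambient space is $\ell^2(\mathds C)$ and one has to ensure that the densely-defined isometry extends uniquely to a unitary operator: boundedness is automatic from the Gram matrix identity, and surjectivity comes from running the construction symmetrically starting from $\{f_j^{(2)}\}$. Once this analytic point is addressed, the argument is identical to the finite-dimensional one.
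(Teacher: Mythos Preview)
The paper does not supply its own proof of this statement: Theorem~\ref{Calabirig} is quoted verbatim from \cite[Th.~2]{calabi-ann} and used as a black box (see, for instance, the proofs of Propositions~\ref{prop:equivalent-defi} and~\ref{prop:uniqueness}). So there is nothing in the paper to compare your argument against line by line.

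That said, your proposal is a faithful reconstruction of Calabi's original proof: pull the diastasis back via the hereditary property, read off the equality of Gram matrices for the Taylor coefficients, and use fullness to upgrade this to a unitary intertwiner. The analytic continuation step and the care you take in the $N=\infty$ case are both appropriate.

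One point worth flagging: the translation you perform at the outset cannot literally be ``absorbed at the end'' into a \emph{linear} unitary $U$. What your argument actually produces is a rigid motion of $\mathds C^N$ --- a unitary composed with a translation --- and indeed this is what Calabi's theorem gives for the flat ambient space. The paper's statement of Theorem~\ref{Calabirig} is slightly loose on this point, and the paper itself tacitly acknowledges this in the proof of Proposition~\ref{prop:uniqueness}, where it writes that ``K\"ahler immersions are defined up to unitary transformations and translations''. So your handling is correct; just be aware that the conclusion is $f_2 = U\circ f_1 + c$ for some $c\in\mathds C^N$, with $c=0$ only under additional normalization (such as the equivariance used later in the paper).
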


When the manifold is simply connected, one can glue together all the local immersions to get a global one. When $M$ is not simply connected, the existence of a local K\"ahler immersion implies the existence of a global one $f\colon \tilde M\to \mathds{C}^N$ for its universal covering $\tilde M$. This map $f$ descends to $M$ when, for any $p\in M$, the analytic extension of the diastasis $D_p$ centered at $p$ is single valued. The global Calabi's criterion can be stated as follows:
\begin{theorem}[Calabi's global criterion {\cite[Th. 6]{calabi-ann}}]\label{thm:calabi-global}
A real analytic K\"ahler manifold $(M,g)$ admits a K\"ahler immersion into $\mathds{C}^N$, $N\leq+\infty$, if and only if the following conditions are fulfilled:
\begin{enumerate}
\item $(M,g)$ is resolvable of rank at most $N$;
\item for any $p\in M$, the analytic extension of the diastasis $D_p$ is single valued.
\end{enumerate}
\end{theorem}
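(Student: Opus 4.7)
The plan is to split the equivalence into the easy direction and the constructive converse, leaning throughout on two ingredients already in hand: the hereditary formula $D(z,z')=\|f(z)-f(z')\|^2$, which follows by combining Proposition \ref{induceddiast} with Example \ref{diastflat}; and Calabi's rigidity (Theorem \ref{Calabirig}), which controls how two local full immersions of the same K\"ahler germ differ.

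The forward direction $(\Rightarrow)$ is essentially bookkeeping. If $f\colon M\to\mathds C^N$ is a K\"ahler immersion, then the local expression of $f$ near any point produces a resolution of the diastasis, so $(M,g)$ is resolvable of rank at most $N$ by Theorem \ref{localcriterion}; and the hereditary formula exhibits $D_p(z)=\|f(z)-f(p)\|^2$ as a globally defined, single-valued real analytic function on $M$, so its analytic extension is trivially single-valued.

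For the converse $(\Leftarrow)$, I would work in two stages: first construct a global K\"ahler immersion $\tilde f\colon\tilde M\to\mathds C^N$ on the universal covering (where I may as well take $N$ equal to the rank of resolvability, so the construction produces a full immersion), then descend $\tilde f$ to $M$ using condition (2). For the first stage, Theorem \ref{localcriterion} furnishes a local full K\"ahler immersion $f_{\tilde q}$ near every $\tilde q\in\tilde M$, normalized so that $f_{\tilde q}(\tilde q)=0$; on any non-empty overlap, Theorem \ref{Calabirig} forces two such germs to agree up to a unique rigid motion of $\mathds C^N$. These rigid motions satisfy the cocycle condition on triple overlaps, and since $\tilde M$ is simply connected the resulting flat cocycle is a coboundary, so the local pieces reassemble into a single holomorphic immersion $\tilde f$ which, by the hereditary formula, satisfies $\|\tilde f(\tilde z)-\tilde f(\tilde q)\|^2=\tilde D_{\tilde q}(\tilde z)$.

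Condition (2) enters in the second stage. For a deck transformation $\sigma$ of $\tilde M\to M$ and any $\tilde q\in\tilde M$, the two lifts $\tilde q$ and $\sigma\tilde q$ project to the same $q\in M$, so single-valuedness of $D_q$ on $M$ gives $\tilde D_{\tilde q}=\tilde D_{\sigma\tilde q}$ as functions on $\tilde M$. Combined with the hereditary identity, this yields
\begin{equation*}
\|\tilde f(\tilde z)-\tilde f(\tilde q)\|^2 \;=\; \|\tilde f(\tilde z)-\tilde f(\sigma\tilde q)\|^2 \qquad \text{for every } \tilde z\in\tilde M.
\end{equation*}
Expanding the squared norms, the holomorphic function $\tilde z\mapsto\langle\tilde f(\tilde z),\tilde f(\sigma\tilde q)-\tilde f(\tilde q)\rangle$ has constant real part and is therefore constant; fullness of $\tilde f$ then forces $\tilde f(\sigma\tilde q)=\tilde f(\tilde q)$. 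As $\tilde q$ was arbitrary we obtain $\tilde f\circ\sigma=\tilde f$, so $\tilde f$ factors through the desired K\"ahler immersion $f\colon M\to\mathds C^N$. The main delicacy I anticipate is the \v{C}ech-type gluing step on $\tilde M$: checking that the rigid-motion ambiguities produced by rigidity really satisfy the cocycle condition and that simple connectedness is enough to trivialize them in the group of unitary affine transformations of $\mathds C^N$, with the infinite-dimensional case $N=\infty$ requiring a bit of extra care.
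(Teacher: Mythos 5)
The paper itself gives no proof of this statement---it is quoted directly from Calabi \cite{calabi-ann}---but the paragraph immediately preceding it sketches exactly the two-stage strategy you follow: glue the local immersions into a global one on the universal cover, then use single-valuedness of the diastasis to make it descend. Your fleshed-out version (trivial monodromy of the rigid-motion ambiguities over the simply connected cover, and the fullness argument forcing $\tilde f\circ\sigma=\tilde f$ from $\tilde D_{\tilde q}=\tilde D_{\sigma\tilde q}$) is correct and is essentially Calabi's original argument.
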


\section{Isometric immersions of lcK manifolds in Hopf manifolds}\label{lcKiso}

\subsection{Notation on lcK structures}\label{lcKnot}
Let $M$ be a compact complex manifold, of complex dimension $n$. We recall that a {\em locally conformally K\"ahler} (shortly, {\em lcK}) structure on $M$ is given by a Hermitian metric $g$ with associated $(1,1)$-form $\omega$ such that $d\omega-\theta\wedge\omega=0$ with $d\theta=0$; here, the $1$-form $\theta$ is called the {\em Lee form}. Equivalently, an lcK structure corresponds to a covering $\tilde M \to M$ where $\tilde M$ is endowed with a K\"ahler metric $\tilde\omega$, globally conformal to the pull-back of $\omega$ itself, such that the deck transformation group $\Gamma$ acts by holomorphic homoteties, with factors given by the character $\chi \colon \Gamma \to \mathds R^{>0}$.

An lcK structure $(\omega,\theta)$ is said to be {\em with potential} if there exists a K\"ahler covering $(\tilde M, \tilde\omega)$ admitting a positive automorphic global potential, namely, $\tilde\omega=dd^c\psi$ where $\psi>0$ is a positive function such that $\gamma^*\psi=\chi(\gamma)\psi$ for any $\gamma\in\Gamma$.
Up to small deformations of $(\omega,\theta)$ in the $\mathcal{C}^\infty$-topology, we can assume that the potential $\psi$ is also {\em proper}, see \cite[page 93]{ornea-verbitsky-JGP}.
This last condition is the same as asking that the {\em lcK rank}, namely, the rank of $\mathrm{im}\,\chi$, is $1$, see \cite[Claim 2.8]{ornea-verbitsky-JGP}.

An lcK structure is called {\em Vaisman} if the associated Lee form is parallel with respect to the Levi Civita connection of $g$ itself. Equivalently, up to small deformations, they admit a K\"ahler covering being a Riemannian cone over a Sasaki manifold (see \cite{ornea-verbitsky-JGP} and references therein for dimension greater than two and \cite{belgun} for Vaisman compact complex surfaces). Observe that Vaisman manifolds are with potential, given by {$\|\vartheta^{\sharp_{\tilde g}}\|_{\tilde g}^2$, where $\tilde g$ is the metric associated to $\tilde \omega$}.

We recall that primary {\em Hopf manifolds} are quotient of $\mathds{C}^{n}\setminus \{0\}$ by a free action of the cyclic group $\Gamma = \langle \gamma \rangle \simeq \mathds{Z}$, where $\gamma$ acts by holomorphic contractions preserving $0$ (that is, its linearization in $0$ is invertible with eigenvalues of norm less than $1$). If $\gamma$ is linear, we say that the Hopf manifold is {\em linear}; the special case $\gamma=\lambda\cdot \mathrm{id}$ for $\lambda\in\mathds C$, $|\lambda|<1$ will be referred as {\em classical} (or {\em simple elliptic} following the notation for surfaces).
Linear Hopf manifolds are lcK with proper potential. {\em Diagonal Hopf manifolds}, namely, linear Hopf manifolds whose fundamental group is generated by a semisimple matrix, are in fact Vaisman.

In particular, primary {\em Hopf surfaces} \cite{kodaira-Surfaces} are obtained by $\gamma(z,w)=(\alpha z+\lambda w^m, \beta w)$ where $\alpha,\beta\in\mathds{C}\setminus\{0\}$, $\lambda\in\mathds{C}$, $m\in\mathds{N}\setminus\{0\}$, with $|\alpha|<1$, $|\beta|<1$, $\lambda (\alpha-\beta^m)=0$. They are Vaisman if and only if $\lambda=0$ (known as {\em class 1}) \cite[Theorem 1]{belgun}; also in case $\lambda\neq0$ (known as {\em class 0}), they admit a lcK metric \cite[Theorem 1]{gauduchon-ornea}.

In  \cite{ornea-verbitsky-MathAnn2, ornea-verbitsky-Cortona}, L. Ornea and M. Verbitsky prove the following embedding theorem:

\begin{theorem}[{\cite[Theorem 3.4]{ornea-verbitsky-MathAnn2}, \cite[Theorem 5.6]{ornea-verbitsky-Cortona}}]
Any compact lcK manifold with potential of complex dimension at least $3$ can be holomorphically embedded into a Hopf manifold. A compact Vaisman manifold of complex dimension at least $3$ can be holomorphically embedded into a diagonal Hopf manifold. If the Spherical Shell conjecture holds true, then the same conclusion holds for complex dimension $2$.
\end{theorem}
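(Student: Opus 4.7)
The plan is to produce, starting from the K\"ahler covering $(\tilde M, \tilde\omega)$ with its proper positive automorphic potential $\psi$ (so $\tilde\omega = dd^c\psi$ and $\gamma^*\psi = \chi(\gamma)\psi$), a holomorphic $\Gamma$-equivariant embedding $\tilde M \hookrightarrow \mathds{C}^N \setminus \{0\}$ intertwining the deck action with a linear contraction; this will then descend to the claimed embedding of $M$ into a Hopf manifold.

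First, I would form the one-point completion $\tilde M^c := \tilde M \cup \{o\}$ obtained by collapsing the end $\{\psi\to 0\}$ to a single point $o$. Properness of $\psi$ together with compactness of $M$ ensures that the sublevel sets $\{\psi \leq c\}$ are compact and shrink to $o$ as $c \to 0$, and that $\Gamma$ acts cocompactly on each annulus. The crucial analytic step is to equip $\tilde M^c$ with the structure of a normal Stein analytic space having an isolated singularity at $o$, and to extend the $\Gamma$-action to a holomorphic action fixing $o$ by contractions with spectral radius $\chi(\gamma) < 1$ at $o$. This uses Andreotti-Grauert/Rossi-Siu type results for pseudoconvex ends, where the strict plurisubharmonicity of $\psi$ provides the convexity input.

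Next, I would construct an equivariant embedding by choosing holomorphic functions on $\tilde M^c$ that are eigenvectors, or generalized eigenvectors, of $\gamma^*$. Decomposing the maximal ideal at $o$ into generalized eigenspaces of the contraction, I would select finitely many generators $f_1,\dots,f_N$ that separate points of $\tilde M^c$, separate tangent directions at $o$, and transform under $\gamma^*$ by a single matrix $A$ with $\|A\|<1$. Complex dimension at least $3$ enters here: the algebra of holomorphic functions on the Stein space $\tilde M^c$ is rich enough, and Hartogs-type extension across $o$ ensures automorphic sections descend suitably to give an embedding, ruling out low-dimensional obstructions. The map $F = (f_1,\dots,f_N)\colon \tilde M^c \to \mathds{C}^N$ then satisfies $F(o)=0$ and $F\circ\gamma = A\cdot F$, so $F|_{\tilde M}$ avoids the origin and is $\Gamma$-equivariant, and descends to a holomorphic embedding $M \hookrightarrow (\mathds{C}^N\setminus\{0\})/\langle A\rangle$.

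For the Vaisman case, the canonical potential $\psi = \|\theta^{\sharp_{\tilde g}}\|_{\tilde g}^2$ is homogeneous of weight $2$ under the flow of the lifted Lee vector field, and this extra $\mathds{R}^+$-symmetry, commuting with $\Gamma$, forces the action at $o$ to be conjugate to a semisimple linear contraction; hence $A$ can be chosen diagonalizable and one lands in a diagonal Hopf manifold. In complex dimension $2$ the construction of the Stein completion can break down for class VII surfaces lacking global spherical shells, which is why the Spherical Shell conjecture is invoked to cover that remaining case. The main obstacle in my scheme is the first step: producing $\tilde M^c$ as a Stein space and extending $\Gamma$ while retaining enough automorphic holomorphic functions to separate points and tangents equivariantly. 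This is where the hypothesis of proper potential, and the dimensional restriction $n\geq 3$, are truly used.
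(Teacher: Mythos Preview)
The paper does not supply its own proof of this theorem: it is quoted verbatim from \cite{ornea-verbitsky-MathAnn2, ornea-verbitsky-Cortona} as background, and the authors immediately move on without argument. So there is no ``paper's own proof'' against which to compare your attempt.

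That said, your sketch is a faithful outline of the Ornea--Verbitsky strategy: form the one-point metric completion $\tilde M^c$, show it is a Stein space with an isolated singularity (this is where $\dim_{\mathds C}\geq 3$ or the Spherical Shell conjecture in dimension $2$ enters, via Rossi and Andreotti--Siu filling of strongly pseudoconvex CR boundaries), and then produce an equivariant embedding by selecting finitely many holomorphic functions that are generalized eigenvectors of the contraction $\gamma^*$. The one place where the actual proofs in \cite{ornea-verbitsky-MathAnn2, ornea-verbitsky-Cortona} are sharper than your sketch is the construction of enough equivariant functions: rather than decomposing the maximal ideal directly, they apply the Riesz--Schauder theory of compact operators to $\gamma^*$ acting on a suitable Banach space of holomorphic functions on $\tilde M^c$, obtaining a finite-dimensional $\gamma^*$-invariant subspace that separates points and tangents. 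Your phrase ``decomposing the maximal ideal into generalized eigenspaces'' hides the analytic subtlety here, since that ideal is infinite-dimensional and one must justify why a finite-dimensional invariant piece suffices; this is precisely what the compact-operator argument provides. For the Vaisman refinement, your reasoning that the additional holomorphic $\mathds C^*$-flow forces semisimplicity of the linearized action is correct in spirit and matches the original.
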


Here, the Spherical Shell conjecture refers to the statement that any class $\text{VII}_0$ surface with $b_2>0$ is a Kato surface.

\subsection{Infinite dimensional Hopf manifold}
In view of the Ornea and Verbitsky embedding theorem in lcK geometry, \cite[Theorem 3.4]{ornea-verbitsky-MathAnn2}, we consider as ambient space the {\em $N$-dimensional classical Hopf manifold} {of dimension $N\in\mathds N$} defined as
$$ H^N_\lambda := (\mathds C^N \setminus \{0\}) / \mathds Z , $$
where $\mathds Z$ is generated by $\lambda \cdot \mathrm{id}$ where $\lambda\in\mathds C\setminus\{0\}$ and $|\lambda|\neq1$. We endow it with the lcK metric
$$ g_H := \frac{1}{\|z\|^2}g_0 , $$
where $g_0$ is the flat metric on $\mathds C^N$; we denote by {$\omega_H=\|z\|^{-2}dd^c\|z\|^2$ and $\omega_0=dd^c\|z\|^2$} the associated $(1,1)$-forms. Its Lee form is
$$ \theta_0 \stackrel{\text{loc}}{=} d\log\|z\|^{-2} . $$
{
The lcK metric is in fact Vaisman with proper automorphic potential $\|z\|^2=D_0^{\mathds C^N}(z)$ equals to the diastasis centered in the origin.
}
Following Calabi's picture, we allow $N$ to be infinite: in this case,
$$ H^\infty_\lambda := (l^2(\mathds C)\setminus\{0\}) / \mathds Z , $$
{
where $l^2(\mathds C)$ is the space of sequences $z=(z_j)_j \subset \mathds C$ with finite $l^2$-norm, namely $\|z\|:=(\sum_{j} |z_j|^2)^{\frac{1}{2}}<+\infty$,
and $g_0$ is the flat metric on $l^2(\mathds C)$, which has associated $(1,1)$-form $\omega_0=dd^c\|z\|^2$.
Moreover, the homothety character $\chi_0 \colon \mathds Z \to \mathds R^{>0}$ has image $\langle \lambda \rangle$ of rank $1$.
In this sense, we consider $H^\infty_\lambda$ as an lcK manifold with "proper" potential.
}

\subsection{Equivalent definitions of lcK immersion}
Let $(M,\omega)$ be an lcK manifold, of complex dimension $\dim_{\mathds C} M \geq2$, with Lee form $\theta$, and denote by $(\tilde M, \tilde \omega)$ a K\"ahler covering, {\itshape i.e.} $\tilde \omega=\exp(-h)\omega$ is a K\"ahler metric, with the pull-back $\tilde\theta=d h$ on $\tilde M$. 

\begin{definition}
An lcK manifold $(M,\omega)$ admits a {\em local lcK immersion} into the classical Hopf manifold $(H^N_\lambda,\omega_H)$, where $N\leq+\infty$, if, for any point $x\in M$, there exist a neighbourhood $U\ni x$ and a holomorphic map $F \colon U \to H^N_\lambda$ such that $F^*\omega_H = \omega$.
We call an lcK immersion {\em full} if the dimension of the ambient space can not be lowered.
\end{definition}

Note that the lcK metric determines the Lee form, and that the following objects are essentially equivalent: the Lee form, its local potential, the monodromy group of the covering.
Then the existence of lcK immersions can be stated in equivalent ways:
\begin{prop}
Let $(M,\omega)$ be an lcK manifold, of complex dimension $\dim M \geq2$, with Lee form $\theta$. If $(M,\omega)$ admits a local lcK immersion into $(H^N_\lambda,\omega_H)$, where $N\leq+\infty$, then $F$ is a holomorphic immersion such that $F^*\theta_0=\theta$.
\end{prop}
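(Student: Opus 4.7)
The plan is to split the statement into its two claims---that $F$ is a (holomorphic) immersion, and that $F^*\theta_0 = \theta$---and to treat them in turn, both following from the hypothesis $F^*\omega_H = \omega$ together with the lcK identities on the source and target.

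For the immersion claim I would argue as follows. Let $g$ and $g_H$ denote the Hermitian metrics associated to $\omega$ and $\omega_H$. Since $F$ is holomorphic, $dF$ intertwines the complex structures, so the relation $g(\cdot,\cdot)=\omega(\cdot,J\cdot)$ and its analogue on $H^N_\lambda$ upgrade $F^*\omega_H = \omega$ to $F^*g_H = g$. Then for every tangent vector $v$ one has $g_H(dF(v),dF(v)) = g(v,v) \geq 0$, with equality if and only if $v=0$, because $g$ is positive definite. Hence $dF$ is pointwise injective and $F$ is an immersion.

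For the pullback of the Lee form, I would take the exterior derivative of the identity $F^*\omega_H = \omega$ and combine the two defining lcK identities $d\omega_H = \theta_0 \wedge \omega_H$ on the target and $d\omega = \theta \wedge \omega$ on $M$. By naturality of $d$ under pullback,
\[
F^*\theta_0 \wedge \omega \;=\; F^*(\theta_0 \wedge \omega_H) \;=\; F^*(d\omega_H) \;=\; d(F^*\omega_H) \;=\; d\omega \;=\; \theta \wedge \omega,
\]
so that $(F^*\theta_0 - \theta)\wedge \omega = 0$.

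To pass from this wedge identity to the equality of $1$-forms, I would invoke the assumption $\dim_{\mathds C} M \geq 2$: since $\omega$ is a nondegenerate (indeed positive) real $2$-form on a manifold of real dimension at least $4$, the Lefschetz operator $L_\omega \colon \alpha \mapsto \alpha \wedge \omega$ is injective on $1$-forms (by the standard $\mathfrak{sl}_2$-action on the exterior algebra, or equivalently by checking on a Darboux frame). This forces $F^*\theta_0 - \theta = 0$. I do not foresee any substantive obstacle: the argument is a formal consequence of the compatibility $F^*\omega_H = \omega$, the positivity of $\omega$, and the two lcK identities, and the dimension hypothesis is used only to guarantee injectivity of $L_\omega$ on $\Omega^1$.
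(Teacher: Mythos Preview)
Your proof is correct and follows essentially the same route as the paper: both argue that the isometric condition $F^*\omega_H=\omega$ forces $dF$ to be injective, then differentiate $F^*\omega_H=\omega$ and use the lcK identities on source and target to obtain $(F^*\theta_0-\theta)\wedge\omega=0$, concluding via injectivity of $\omega\wedge\_$ on $1$-forms in complex dimension $\geq 2$. Your version is slightly more explicit in justifying the immersion step and in naming the Lefschetz operator, but the argument is the same.
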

\begin{proof}
We notice that, $F$ being an isometry, it is also an immersion.
Moreover, the condition $F^*\omega_H=\omega$ assures also $F^*\theta_0=\theta$. Indeed, we have:
$$ d \omega = d F^* \omega_H = F^* d \omega_H  = F^*(\theta_0 \wedge \omega_H) = F^*\theta_0 \wedge \omega, $$
and on the other hand:
$$ d \omega = \theta \wedge \omega .$$
By comparing the two expressions, we get:
$$ (F^*\theta_0 - \theta ) \wedge \omega=0, $$
and the claim follows since $\omega\wedge\_\colon \wedge^1M \to \wedge^3M$ is injective when the complex dimension of $M$ is at least $2$.
\end{proof}

\begin{prop}\label{prop:equivalent-defi}
An lcK manifold $(M,\omega)$ {with proper potential}, of complex dimension $\dim M \geq2$ and with Lee form $\theta$, admits a local lcK immersion into $(H^N_\lambda,\omega_H)$, where $N\leq+\infty$, if and only if a minimal K\"ahler covering $(\tilde M, \tilde \omega)$ of $(M, \omega)$ admits a local K\"ahler immersion $\tilde F$ into $(\mathds C^N, \omega_0)$ and $\tilde F^* \tilde \theta_0 = \tilde \theta$, where $\tilde\theta_0$ and $\tilde\theta$ are the pull-back of the Lee forms on the coverings.
{Moreover, if such an $\tilde F \colon \tilde M \to \mathds C^N$ is a global K\"ahler immersion, then $F \colon M \to H^N_\lambda$ is a global lcK immersion, too.}
\end{prop}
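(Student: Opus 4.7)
The plan is to exploit the standard fact that $(\mathds{C}^N\setminus\{0\},\omega_0)$ is a K\"ahler covering of $(H^N_\lambda,\omega_H)$: the projection $\pi_H$ satisfies $\pi_H^*\omega_H=\|z\|^{-2}\omega_0$, with pulled-back Lee form $\tilde\theta_0=-d\log\|z\|^2$. Equivalently, $h_0(z):=-\log\|z\|^2$ is a global automorphic potential on the covering, so that the condition $\tilde F^*\tilde\theta_0=\tilde\theta$ in the statement is exactly the matching of Lee-form potentials, $\|\tilde F\|^2=e^{-h}$, once $h$ is normalized by the appropriate additive constant. Both the K\"ahler identification and the Lee-form matching will then reduce to one and the same computation.

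For the direction $(\Rightarrow)$ I would shrink $U$ to a fundamental neighborhood for $\pi_M$ and lift the lcK immersion $F\colon U\to H^N_\lambda$ to $\tilde F\colon\tilde U\to\mathds{C}^N\setminus\{0\}$ along $\pi_H$. The preceding proposition gives $F^*\theta_0=\theta$, which lifts to $\tilde F^*\tilde\theta_0=\tilde\theta$ and hence, after normalizing $h$, to $\|\tilde F\|^2=e^{-h}$; combining with $F^*\omega_H=\omega$ one computes
\[
\tilde F^*\omega_0 \;=\; \|\tilde F\|^2\,\tilde F^*(\|z\|^{-2}\omega_0) \;=\; e^{-h}\pi_M^*F^*\omega_H \;=\; e^{-h}\pi_M^*\omega \;=\; \tilde\omega,
\]
so $\tilde F$ is a local K\"ahler immersion with the asserted Lee-form condition. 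The converse $(\Leftarrow)$ is the same chain read in reverse on a sheet $\tilde U\subset\tilde M$: set $F:=\pi_H\circ\tilde F\circ(\pi_M|_{\tilde U})^{-1}$ on $U:=\pi_M(\tilde U)$ and verify $F^*\omega_H=\omega$. Both arguments rely only on the fact that $\pi_H$ and $\pi_M$ are local biholomorphisms, so no global hypothesis enters the equivalence.

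For the \emph{moreover} part, assume $\tilde F\colon\tilde M\to\mathds{C}^N$ is globally defined. I must show $\pi_H\circ\tilde F$ is $\Gamma$-invariant, equivalently that for every deck transformation $\gamma\in\Gamma$ of $\tilde M\to M$ there is $m_\gamma\in\mathds Z$ with $\tilde F\circ\gamma=\lambda^{m_\gamma}\tilde F$. The transformation rule $\gamma^*h=h-\log\chi(\gamma)$ together with $\|\tilde F\|^2=e^{-h}$ yields $\|\tilde F\circ\gamma\|^2=\chi(\gamma)\|\tilde F\|^2$. The proper-potential hypothesis identifies $\chi(\Gamma)$ with a rank-one subgroup of $\mathds R^{>0}$, which up to the choice of $\lambda$ we may take to be $\langle|\lambda|^2\rangle$; thus $\chi(\gamma)=|\lambda|^{2m_\gamma}$ for a unique $m_\gamma\in\mathds Z$. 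The two maps $\tilde F\circ\gamma$ and $\lambda^{m_\gamma}\tilde F$ are then full K\"ahler immersions of $(\tilde M,\chi(\gamma)\tilde\omega)$ into $(\mathds{C}^N,\omega_0)$, so by Calabi's rigidity (Theorem~\ref{Calabirig}) they differ by a unitary transformation $U_\gamma$ of $\mathds{C}^N$. Once $U_\gamma=\mathrm{id}$ is established, equivariance follows and $F$ descends as a global lcK immersion.

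The main obstacle I expect lies in this last step: ruling out the Calabi unitary ambiguity beyond $\langle\lambda\cdot\mathrm{id}\rangle$. The norm identity and the pulled-back Lee form are insensitive to the full $U(N)$-action on the target, so the argument must invoke a finer rigidity input---most naturally a comparison of the diastasis expansions of the two competing immersions via the hereditary property (Proposition~\ref{induceddiast}) at a chosen base point, together with the characterization of the diastasis as the unique potential whose Taylor expansion has no purely holomorphic or antiholomorphic terms (Remark~\ref{charact}). Once the equivariance $\tilde F\circ\gamma=\lambda^{m_\gamma}\tilde F$ is secured, the descent delivers the required global lcK immersion $F\colon M\to H^N_\lambda$.
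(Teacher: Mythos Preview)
For the local equivalence your argument is essentially the paper's: lift through the two covering projections, convert $\tilde F^*\tilde\theta_0=\tilde\theta$ into $\|\tilde F\|^2=e^{-h}$ up to a multiplicative constant, and run the chain $\tilde F^*\omega_0=\|\tilde F\|^2\,\tilde F^*(\|z\|^{-2}\omega_0)=e^{-h}\omega=\tilde\omega$ in both directions.

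For the ``moreover'' clause you again follow the paper's route via Calabi rigidity, and you correctly isolate the crux: the unitary $U_\gamma$ in $\tilde F\circ\gamma=\lambda^{m_\gamma}U_\gamma\,\tilde F$. The paper does not eliminate $U_\gamma$ either---after invoking rigidity it simply writes ``up to unitary transformations, $(\lambda\tilde F)\circ\tilde\gamma=\tilde F$'' and takes $\zeta=\lambda^{-1}\cdot\mathrm{id}$, which is precisely the step you flag as unfinished. Your proposed diastasis-based repair cannot work, however: both the diastasis (via the hereditary property, Prop.~\ref{induceddiast}) and the Lee-form condition are invariant under the full $U(N)$-action on the target, so neither can separate $\tilde F$ from $U_\gamma\tilde F$. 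Indeed the obstruction is genuine, as the paper's own Corollary~\ref{cor:hopf-surface-quoziente} illustrates: for $H_{\alpha,\beta}$ with $|\alpha|=|\beta|$ but $\alpha\neq\beta$, the global covering immersion $\tilde F=\mathrm{id}$ satisfies $\tilde F^*\tilde\theta_0=\tilde\theta$, yet $\tilde F\circ\gamma=\mathrm{diag}(\alpha,\beta)\,\tilde F$ is non-scalar and no descent to a classical $H^N_\lambda$ exists. What the argument (yours and the paper's) actually delivers is equivariance with respect to $\lambda^{m_\gamma}U_\gamma$, hence a map into the \emph{linear} Hopf quotient $(\mathds C^N\setminus\{0\})/\langle\lambda^{m_\gamma}U_\gamma\rangle$; descent to a classical Hopf manifold needs $U_\gamma$ to be scalar, which is not forced by the stated hypotheses.
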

\begin{proof}
We prove that a local lcK immersion of $(M,\omega)$ into $(H^N_\lambda,\omega_H)$ induces a local K\"ahler immersion of a minimal K\"ahler covering $(\tilde M, \tilde \omega)$ into $(\mathds C^N, \omega_0)$, which in fact is equivariant with respect to the actions of the deck transformation group on $\tilde M$ and of $\mathds Z=\langle \lambda \cdot \mathrm{id} \rangle$ on $\mathds C^N$.
Indeed, take $F_1\colon M \supseteq U \to \mathds C^N$ any lift of the map $F\colon M \supseteq U \to H^N_\lambda = \mathds C^N \slash \mathds Z$; observe that two lifts differ by the action of $\mathds Z=\langle \lambda \cdot \mathrm{id} \rangle$.
We notice that, on $\tilde M$, one has $\tilde\theta=d h$. Then the condition $F^*\theta_0=\theta$ can be rewritten as $d(\tilde F^*\log\|z\|^{-2}-h)=0$, or equivalently, $\tilde F^*\|z\|^{-2}=\exp h$ up to multiplicative constants. Then:
$$ \tilde F^*\omega_0 = \tilde F^*(\|z\|^2 \omega_H) = \tilde F^*\|z\|^2 \cdot F^*\omega_H \\
= \exp(-h)\omega = \tilde \omega, $$
up to multiplicative constants.
We notice that, if $F$ is a local K\"ahler immersion of $(\tilde M,\tilde \omega)$ into $(\mathds C^N,\omega_0)$, then $\sqrt{c}F$, for $c\in\mathds R^{>0}$ constant, is a local K\"ahler immersion of $(\tilde M, c^{-1}\tilde\omega)$ into $(\mathds C^N,\omega_0)$.
Then $F$ induces a local K\"ahler immersion $\tilde F$ of the K\"ahler covering into $\mathds C^N$ with the flat metric $\omega_0$.

Conversely, we prove that, if a minimal K\"ahler covering $(\tilde M, \tilde \omega)$ of $(M, \omega)$ admits a local {(respectively, global)} K\"ahler immersion $\tilde F$ into $(\mathds C^N, \omega_0)$ such that $\tilde F^*\tilde\theta_0=\tilde\theta$, then
{
we can make $\tilde F$ equivariant with respect to a suitable action on $\mathds C^N$, which is in fact a multiple of the identity, and then it induces a local (respectively, global) lcK immersion of $(M, \omega)$ into $(H^N_\lambda,\omega_H)$, for some $\lambda$.
Indeed, consider the homothety characters $\chi_M \colon \pi_1(M) \supseteq \mathrm{Aut}_M\tilde M \to \mathds{R}^{>0}$, where $\mathrm{Aut}_M\tilde M$ is the deck transformations group of the covering $\tilde M \to M$.
The homothety character is determined by the Lee form, see {\itshape e.g.} \cite[Theorem 3.1]{parton-vuletescu}, or by its lift: indeed, $\chi_M(\gamma) = \exp \int_\gamma \theta = \exp \int_{\tilde\gamma} \tilde \theta$ where $\tilde\gamma$ is a lift of $\gamma \in \mathrm{Aut}_M\tilde M \subseteq \pi_1(M)$ to $\tilde M$.
Take $\gamma \in \pi_1(M)$ a generator of $\mathrm{Aut}_M\tilde M$. We have
$\lambda^{-2} := \chi_M(\gamma) = \exp \int_{\tilde \gamma} \tilde F^* \tilde\theta_0=\exp \int_{\tilde F\circ\tilde\gamma}\tilde\theta_0$ and also $\lambda^{-2} = \exp \int_{z}^{\lambda\cdot z} d\log \|z\|^{-2} = \exp \int_{z}^{\lambda\cdot z} \tilde\theta_0$ that we expect to be a generator for $\chi_0(H^N_\lambda)$ the image of the homothety character of $H^N_\lambda$.
Both $\tilde F$ and $\lambda \tilde F \circ \tilde\gamma$ are isometric immersions into $(\mathbb C^N,g_0)$, then they differ by a unitary transformation of $\mathbb C^N$ thanks to Calabi's rigidity Th. \ref{Calabirig}. Then, up to unitary transformations, we have that $(\lambda\tilde F) \circ \tilde \gamma = \tilde F$. Then we get that $\tilde F\circ \tilde \gamma=\zeta\circ \tilde F$ for the trasformation $\zeta=\lambda^{-1} \cdot {\rm id}$ of the whole $\mathds C^N$.

We conclude by showing that $F^*\omega_H=\omega$. Indeed, we have $\tilde \theta_0 = d \log \|z\|^{-2}$ and, say, $\tilde \theta = d h$; then the condition $\tilde F \tilde \theta_0 = \tilde \theta$ rewrites as $F^*\|z\|^{-2}=\exp h$ up to multiplicative constants.
}
Finally, we compute
$$ F^*\omega_H= F^*(\|z\|^{-2}\omega_0) = F^*\|z\|^{-2} \cdot F^*\omega_0 = F^*\|z\|^{-2} \cdot \tilde\omega = \exp(\log F^*\|z\|^{-2}-h) \omega =\omega , $$
concluding the proof.
\end{proof}

In the sight of Prop. \ref{prop:equivalent-defi}, an lcK immersion is full in $H^N_\lambda$ if and only if the K\"ahler immersion of the K\"ahler covering is full in $\mathds C^N$.

\subsection{Properties of lcK immersions}

If an lcK immersion exists, then it is essentially unique. More precisely we have:
\begin{prop}\label{prop:uniqueness}
Local lcK immersions are determined up to unitary transformations of $\mathds C^N$, $N\leq+\infty$.
\end{prop}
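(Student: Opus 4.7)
The plan is to reduce the statement to Calabi's rigidity theorem (Theorem \ref{Calabirig}) by invoking the lifting correspondence of Proposition \ref{prop:equivalent-defi}.

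First, I would fix two local lcK immersions $F_1,F_2\colon U\to (H^N_\lambda,\omega_H)$ of a simply connected neighbourhood $U$ of a point $x\in M$. Since $\pi_1(U)=0$, each $F_i$ admits a holomorphic lift $\tilde F_i\colon \tilde U\to \mathds C^N\setminus\{0\}$, where $\tilde U\subseteq \tilde M$ denotes a sheet of the local K\"ahler covering over $U$; the lift is unique once a base-point is chosen, and any two choices differ by an element of $\mathds Z=\langle \lambda\cdot\mathrm{id}\rangle$ acting on $\mathds C^N$. By Proposition \ref{prop:equivalent-defi}, each $\tilde F_i$ is then a local K\"ahler immersion of $(\tilde U,\tilde\omega)$ into $(\mathds C^N,\omega_0)$.

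Next, after restricting to the minimal affine subspaces containing the two images so that both immersions become full (into the same $\mathds C^{N'}\subseteq \mathds C^N$, by Calabi's local criterion Theorem \ref{localcriterion}), I would apply Calabi's rigidity (Theorem \ref{Calabirig}) to obtain a unitary transformation $\mathcal U$ of $\mathds C^N$ such that $\tilde F_2=\mathcal U\circ \tilde F_1$. The crucial and pleasantly free step is that $\mathcal U$ passes to the quotient: the deck action on $\mathds C^N\setminus\{0\}$ is generated by the scalar $\lambda\cdot \mathrm{id}$, which commutes with every linear map. Hence $\mathcal U$ descends to a biholomorphic isometry $\bar{\mathcal U}$ of $(H^N_\lambda,\omega_H)$ preserving $\theta_0$, and projecting the equality $\tilde F_2=\mathcal U\circ \tilde F_1$ to $M$ yields $F_2=\bar{\mathcal U}\circ F_1$.

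The only bookkeeping issue I foresee is the ambiguity in the choice of lifts: a different choice replaces $\tilde F_i$ by $\lambda^{k_i}\tilde F_i$, which modifies $\mathcal U$ by the scalar $\lambda^{k_2-k_1}\cdot\mathrm{id}$ but leaves the induced $\bar{\mathcal U}$ on $H^N_\lambda$ unchanged; the infinite-dimensional case $N=+\infty$ is covered directly, since Calabi's rigidity is formulated for $N\leq+\infty$. I do not anticipate a genuine obstacle, since the content of the statement is precisely that the equivalence of Proposition \ref{prop:equivalent-defi} converts the lcK-uniqueness question into the K\"ahler-uniqueness question, which Calabi's rigidity settles.
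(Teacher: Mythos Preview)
Your proposal is correct and follows the same route as the paper: lift both local lcK immersions to K\"ahler immersions of the covering into $\mathds C^N$, invoke Calabi's rigidity (Theorem \ref{Calabirig}) to obtain a unitary $\mathcal U$, and observe that $\mathcal U$ commutes with the scalar action $\lambda\cdot\mathrm{id}$ and therefore descends to $H^N_\lambda$. The paper's proof adds one remark you omit, namely that the equivariance of the lifts is what excludes a possible translation component in the rigid motion; you bypass this by citing Theorem \ref{Calabirig} in the form the paper states it (unitary only), which is consistent but worth being aware of.
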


\begin{proof}
Let $F_1$ and $F_2$ be two local lcK immersions of $M$ into $H^N_\lambda$. We can lift and interpret them as local K\"ahler immersions $\tilde F_1$ and $\tilde F_2$ for a minimal K\"ahler covering $\tilde M$ into $\mathds C^N$. Thanks to the Calabi's rigidity Th. \ref{Calabirig}, K\"ahler immersions are defined up to unitary transformations and translations. In this case, $\tilde F_1$ and $\tilde F_2$ being equivariant, we just have a unitary transformation $\tilde U$ of $\mathds C^N$ such that $\tilde F_2 = \tilde U \circ \tilde F_1$. If $\pi_M \colon \tilde M \to M$ and $\pi_0 \colon \mathds C^N \to H^N_\lambda$ denote the coverings, then $F_2 \circ \pi_M = \pi_0 \circ \tilde F_2 = \pi_0 \circ \tilde U \circ \tilde F_1 = U \circ \pi_0 \circ \tilde F_1 = U \circ F_1 \circ \pi_M$, whence $F_2 = U \circ F_1$ where $U$ is induced by $\tilde U$ on the quotient.
\end{proof}

A first straightforward obstruction for the existence of lcK immersions is the following:

{
\begin{prop}\label{prop:obstruction}
An lcK manifold $(M,\omega)$ admitting {a global lcK immersion} into an infinite dimensional classical Hopf manifold admits a proper potential.
\end{prop}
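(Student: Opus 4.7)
The plan is to transport the proper automorphic potential of the ambient Hopf manifold to $M$ via the immersion. On the K\"ahler covering $(l^2(\mathds{C})\setminus\{0\},\omega_0)$ of $(H^\infty_\lambda,\omega_H)$, the function $\psi_0(z)=\|z\|^2=D_0^{l^2(\mathds{C})}(z)$ is a positive global K\"ahler potential that transforms under the deck action $z\mapsto \lambda z$ with character $\chi_0$ having image $\langle|\lambda|^2\rangle$, a rank-one subgroup of $\mathds{R}^{>0}$ because $|\lambda|\neq 1$. Pulling back $\psi_0$ along an equivariant lift of $F$ should then yield the desired proper potential on a K\"ahler covering of $M$.

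First, I would produce a global equivariant K\"ahler lift $\tilde F\colon \tilde M \to l^2(\mathds{C})\setminus\{0\}$ of the immersion $F\colon M\to H^\infty_\lambda$, on a suitable K\"ahler covering $\tilde M$ of $M$ (for instance, the one obtained by pulling back along $F$ the universal covering $l^2(\mathds{C})\setminus\{0\}\to H^\infty_\lambda$). The argument used in the "only if" direction of the proof of Prop.~\ref{prop:equivalent-defi} does not invoke the proper-potential hypothesis: it produces K\"ahler lifts on charts satisfying $\tilde F^*\omega_0=\tilde\omega$ and $\tilde F^*\tilde\theta_0=\tilde\theta$, which patch on the simply-connected covering into a global $\tilde F$ with equivariance of the form $\tilde F\circ\tilde\gamma=\lambda^{\rho(\gamma)}\tilde F$ for some homomorphism $\rho\colon\mathrm{Aut}_M\tilde M\to\mathds{Z}$.

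Next, set $\tilde\psi:=\tilde F^*\psi_0=\|\tilde F(\cdot)\|^2$ on $\tilde M$. It is positive because $\tilde F$ avoids the origin, it satisfies $dd^c\tilde\psi=\tilde F^*\omega_0=\tilde\omega$, and the equivariance of $\tilde F$ gives $\gamma^*\tilde\psi=|\lambda|^{2\rho(\gamma)}\tilde\psi$, so $\tilde\psi$ is a positive global automorphic K\"ahler potential with character $\chi_M=\chi_0\circ\rho$. By \cite[Claim~2.8]{ornea-verbitsky-JGP}, properness is equivalent to $\mathrm{im}\,\chi_M$ having rank one; since this image is contained in the rank-one group $\langle|\lambda|^2\rangle$, the only obstruction is the triviality of $\rho$. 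But if $\rho=0$ then $\tilde F$ would descend to a holomorphic map $M\to l^2(\mathds{C})\setminus\{0\}$ with K\"ahler pull-back $\omega$, forcing $\theta=F^*\theta_0=0$ by injectivity of $\omega\wedge\cdot$ on $1$-forms, in contradiction with $(M,\omega)$ being genuinely lcK.

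The step I expect to be the main obstacle is the properness: setting up the equivariance of the lift $\tilde F$ cleanly on the whole covering $\tilde M$, justifying the rank-one characterization in the infinite-dimensional ambient $l^2(\mathds{C})$, and excluding the degenerate case in which $\rho$ collapses. The remainder of the argument is essentially a pull-back computation built on the hereditary property of the diastasis (Prop.~\ref{induceddiast}) and on the forward direction of Prop.~\ref{prop:equivalent-defi}.
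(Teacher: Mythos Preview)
Your proposal is correct and follows essentially the same approach as the paper's proof: pull back the potential $\|z\|^2$ via an equivariant lift $\tilde F$ to obtain a positive automorphic K\"ahler potential on $\tilde M$, observe that the resulting homothety character lands in the rank-one group $\langle|\lambda|^2\rangle$, and invoke the Ornea--Verbitsky equivalence between properness and lcK rank one. The paper is slightly less explicit about constructing the lift (it simply appeals to Prop.~\ref{prop:equivalent-defi}) and does not separately discuss the degenerate rank-zero case you worry about, but the core argument is the same.
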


\begin{proof}
We have $\omega = F^*\omega_H = F^* ( \|z\|^{-2} dd^c \|z\|^2 ) = F^* \|z\|^{-2} dd^c F^*\|z\|^2 = \psi^{-1}dd^c\psi$ where $\psi:=F^*\|z\|^2=\|F\|^2$ is a positive automorphic global potential.

Moreover, the potential is proper. More precisely, we prove that $(M,\omega)$ has lcK rank $1$, that is, the image of its homothety character has rank $1$. This is equivalent to the potential being proper thanks to \cite[Claim 2.4]{ornea-verbitsky-JGP}.
By Prop. \ref{prop:equivalent-defi}, $F$ induces a local K\"ahler immersion $\tilde F$ of the K\"ahler covering $\tilde M$ into $\mathds C^N$ which is equivariant. 
Thus, for any $\gamma\in \mathrm{Aut}_M\tilde M$, we can choose $\zeta \in \mathrm{Aut}_H {\mathds C^N}$ such that $\zeta\circ \tilde F=\tilde F\circ \gamma$. Since $\zeta^*\omega_0=\lambda^k \cdot \omega_0$ for some $k\in \mathds N$ where $\lambda \in \mathrm{im}\, \chi_0$ is a generator of the image of the character of $H$, we have:
$$
\tilde \omega=\tilde F^*\omega_0= \lambda^{-k} \tilde F^*(\zeta^*\omega_0)=\lambda^{-k} (\zeta\circ \tilde F)^*\omega_0=\lambda^{-k} \gamma^*\tilde F^*(\omega_0)=\lambda^{-k}\gamma^*\tilde \omega,
$$
whence
$$
\chi_M(\gamma)=\frac{\gamma^*\tilde\omega}{\tilde\omega}=\lambda^k.
$$
This proves that the image of $\chi_M$ is a subgroup of the image of $\chi_0$, whence the lcK rank of $M$ is $1$.
\end{proof}

\begin{rem}\label{rem:obstruction}
In general, we cannot use the immersion $F$ to conclude that a Hopf-induced lcK metric is Vaisman too, unless the immersion is totally-geodesic. On the other hand, by \cite[Theorem 2.1]{ornea-verbitsky-IMRN} (and \cite[Proposition 2.13]{ornea-verbitsky-JGP}), compact complex manifolds endowed with lcK metrics with (proper) potential can be smally deformed to Vaisman manifolds. In particular, {\itshape compact manifolds admitting Hopf-induced metrics are diffeomorphic to Vaisman manifolds}.
\end{rem}

\begin{rem}
We thank the anonymous referee for having drawn our attention to the result in \cite[Proposition 6.5]{verbitsky}. It states that {\itshape a compact complex submanifold of a Vaisman manifold is itself Vaisman}. The argument in \cite{verbitsky} shows that the restriction of the Lee vector field $\theta^\sharp$ (namely, the metric dual of the Lee form) to the submanifold is actually tangent to the submanifold, as a consequence of the formula $d^c\theta=-\omega+\theta\wedge J\theta$, which holds more in general for lcK structures with potential, \cite[Lemma 5.1]{ornea-verbitsky-JGP09}.
\end{rem}
}

For an lcK manifold $(M,\omega)$ with proper potential and of dimension $n\geq 3$, L. Ornea and M. Verbitsky \cite{ornea-verbitsky-MathAnn} show that the metric completion $\tilde M_c$ of a minimal K\"ahler covering $\tilde M$ is Stein, and $\tilde M_c \setminus \tilde M$ is just one point, that in the sequel we will denote by $p$. For compact complex surfaces, the same result holds under the assumption of the Spherical Shell conjecture (see \cite[Th. 5.6]{ornea-verbitsky-Cortona}).
The following result provides necessary and sufficient conditions to the existence of an lcK immersion in terms of the function $D^{\tilde M_c}_p$ defined as the continuous extension of the diastasis function $D^{\tilde M}_q$ to the one point completion $\tilde M_c$ of $\tilde M$. More precisely if $\tilde F\!:\tilde M\rightarrow \mathds C^N\setminus \{0\}$ is a K\"ahler immersion (and we denote in the same way also its continuous extension to the metric completions) then we define:
\begin{equation}\label{dpc}
\lim_{q\rightarrow p}D^{\tilde M}_q=\lim_{q\rightarrow p}F^*||z-F(q)||^2=F^*\lim_{q\rightarrow p}||z-F(q)||^2=F^*||z||^2=:D^{\tilde M_c}_p,
\end{equation}
where we used the definition of the diastasis of $\mathds C^N$ Ex. \ref{diastflat} and the Hereditary property Prop. \ref{induceddiast}.
Note that if the metric completion is smooth, $D^{\tilde M_c}_p$ is exactly the diastasis function centered at $p$; but in general $\tilde M_c$ is not smooth, unless $M$ is a Hopf manifold.

\begin{prop}\label{prop:equivalent-defi-2}
Let $(M,\omega)$ be {an lcK} manifold with proper potential, of complex dimension $\dim_{\mathds C} M \geq2$, and let $\theta$ be the Lee form.
In case $\dim_{\mathds C}M=2$, assume also the Spherical Shell conjecture to hold true.
Let $(\tilde M, \tilde\omega)$ be a minimal K\"ahler covering and $(\tilde M_c, \tilde\omega_c)$ be its metric completion, obtained by adding the point $p$.

The following are equivalent:
\begin{enumerate}
\item $(M,\omega)$ admits a local lcK immersion into an $N$-dimensional classical Hopf manifold;
\item $(\tilde M, \tilde\omega)$ is resolvable of rank at most $N$ and the function $D_p^{\tilde M_c}$ given by \eqref{dpc}, satisfies $\theta\stackrel{\text{loc}}{=}-d\log D_p^{\tilde M_c}$;
\item $(\tilde M, \tilde\omega)$ is resolvable of rank at most $N$ and the function $D_p^{\tilde M_c}$, given by \eqref{dpc}, yields a local automorphic potential for $\tilde\omega$.
\end{enumerate}

{
Moreover, if for any $q\in \tilde M$, the analytic extension of the diastasis $D^{\tilde M}_q$ is single valued, (for example, if $\tilde M$ is simply-connected,) then $(M,\omega)$ is Hopf-induced, that is, it admits a global lcK immersion into an $N$-dimensional classical Hopf manifold.	
}
\end{prop}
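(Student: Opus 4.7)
The plan is to prove the equivalences via (1) $\Rightarrow$ (2), (2) $\Leftrightarrow$ (3), and (2) $\Rightarrow$ (1). The two main tools are Prop.~\ref{prop:equivalent-defi}, which converts lcK immersions into $H^N_\lambda$ into equivariant K\"ahler immersions of the covering into $\mathds C^N$, and Calabi's local criterion (Th.~\ref{localcriterion}), characterizing such immersions via resolvability; the diastasis bridges the two worlds via the Hereditary property (Prop.~\ref{induceddiast}).

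For (1) $\Rightarrow$ (2), first lift $F$ to a local K\"ahler immersion $\tilde F\colon\tilde M\to\mathds C^N$ using Prop.~\ref{prop:equivalent-defi}; resolvability of rank at most $N$ then follows from Th.~\ref{localcriterion}. As $q\to p$ in $\tilde M_c$, the equivariance of $\tilde F$ under the scalar deck action on $\mathds C^N$, whose unique fixed point is the origin, forces $\tilde F(q)\to 0$, so Hereditary together with (\ref{dpc}) gives $D_p^{\tilde M_c}=\tilde F^*\|z\|^2$. Hence $\tilde F^*\tilde\theta_0=\tilde F^*d\log\|z\|^{-2}=-d\log D_p^{\tilde M_c}$, and since this coincides with $\tilde\theta$ by Prop.~\ref{prop:equivalent-defi}, it descends to $\theta\stackrel{\text{loc}}{=}-d\log D_p^{\tilde M_c}$ on $M$.

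For (2) $\Leftrightarrow$ (3), observe first that $D_p^{\tilde M_c}$ is itself a K\"ahler potential for $\tilde\omega$: each diastasis $D_q^{\tilde M}$ is a K\"ahler potential, and $dd^c$ commutes with the continuous limit $q\to p$, so $\tilde\omega=dd^c D_p^{\tilde M_c}$. For (2) $\Rightarrow$ (3), the lifted identity $\tilde\theta=-d\log D_p^{\tilde M_c}$ together with the homothety relation $\gamma^*\tilde\omega=\chi(\gamma)\tilde\omega$ gives two constraints: $\gamma$-invariance of $d\log D_p^{\tilde M_c}$ yields $\gamma^*D_p^{\tilde M_c}=c_\gamma D_p^{\tilde M_c}$ for some constant $c_\gamma$, and $dd^c(\gamma^*D_p^{\tilde M_c})=\chi(\gamma)dd^c D_p^{\tilde M_c}$ combined with strict plurisubharmonicity forces $c_\gamma=\chi(\gamma)$. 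Conversely, (3) $\Rightarrow$ (2) exploits the identity $\omega=\psi^{-1}dd^c\psi$ for an lcK structure with proper automorphic potential $\psi$, as derived in the proof of Prop.~\ref{prop:obstruction}: setting $\psi=D_p^{\tilde M_c}$ gives $\omega=(D_p^{\tilde M_c})^{-1}\tilde\omega$, and comparing with $\tilde\omega=e^{-h}\omega$ yields $h=-\log D_p^{\tilde M_c}$ up to constants, hence $\theta=dh=-d\log D_p^{\tilde M_c}$.

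Finally, for (2) $\Rightarrow$ (1), Calabi's Th.~\ref{localcriterion} produces a local K\"ahler immersion $\tilde F\colon\tilde M\to\mathds C^N$, and after a Euclidean translation (an isometry of $\mathds C^N$) normalizing $\lim_{q\to p}\tilde F(q)=0$, Hereditary gives $\tilde F^*\|z\|^2=D_p^{\tilde M_c}$. Thus $\tilde F^*\tilde\theta_0=-d\log D_p^{\tilde M_c}=\tilde\theta$ by (2), and Prop.~\ref{prop:equivalent-defi} delivers a local lcK immersion of $M$ into $H^N_\lambda$. For the global addendum, single-valuedness of every analytic extension $D_q^{\tilde M}$ activates Calabi's global criterion (Th.~\ref{thm:calabi-global}), producing a global K\"ahler immersion $\tilde F$; the global part of Prop.~\ref{prop:equivalent-defi} then promotes it to a global lcK immersion, proving Hopf-inducedness. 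I expect the most delicate point to be the identification in (3) $\Rightarrow$ (2) that the automorphic potential $D_p^{\tilde M_c}$ actually reconstructs the given Lee form $\theta$; this hinges on the characterization $\omega=\psi^{-1}dd^c\psi$ and hence on the proper-potential hypothesis (which in the compact-surface case is precisely where the Spherical Shell conjecture enters through the Ornea--Verbitsky structure of $\tilde M_c$).
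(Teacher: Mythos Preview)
Your proof is correct and follows essentially the same route as the paper: both reduce via Prop.~\ref{prop:equivalent-defi} to K\"ahler immersions of the covering, invoke Calabi's local criterion for resolvability, and exploit the identification $D_p^{\tilde M_c}=\tilde F^*\|z\|^2$ from \eqref{dpc} to translate the Lee-form condition $\tilde F^*\tilde\theta_0=\tilde\theta$ into $\tilde\theta=-d\log D_p^{\tilde M_c}$; the global addendum via Th.~\ref{thm:calabi-global} is likewise identical. The paper is somewhat terser on the $(2)\Leftrightarrow(3)$ step (it declares $(3)\Rightarrow(2)$ ``clear'' and for the converse just records that $F^*\omega_H=\omega$ is equivalent to $\omega=(D_p^{\tilde M_c})^{-1}dd^c D_p^{\tilde M_c}$), whereas you spell out $(2)\Rightarrow(3)$ explicitly, which is a welcome clarification.

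One small caution: in your $(3)\Rightarrow(2)$ you invoke Prop.~\ref{prop:obstruction} for the identity $\omega=\psi^{-1}dd^c\psi$, but there that identity is derived \emph{from} an existing immersion (i.e.\ assuming~(1)) rather than as a general property of automorphic potentials; it would be cleaner to argue directly that the automorphic relation $\gamma^*D_p^{\tilde M_c}=\chi(\gamma)D_p^{\tilde M_c}$ together with $dd^cD_p^{\tilde M_c}=\tilde\omega$ forces $(D_p^{\tilde M_c})^{-1}\tilde\omega$ to descend to $\omega$. That said, this is exactly the step the paper itself glosses over with ``Clearly'', so your level of detail matches the original.
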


\begin{proof}
In Prop. \ref{prop:equivalent-defi}, we have seen that the existence of a local lcK immersion of $M$ into $H^N_\lambda$, $N\leq+\infty$, is equivalent to the existence of a local K\"ahler immersion of $\tilde M$ into $\mathds C^N$ that preserves the pull-backs of the Lee forms, that is, $F^*\tilde\theta_0=\tilde\theta$.

The condition on the resolvability is equivalent to the existence of a local K\"ahler immersion $\tilde F$ of $\tilde M$ into $\mathds C^N\setminus\{0\}$ thanks to Calabi's criterion Th. \ref{localcriterion}.
Since $\tilde\theta_0=d\log\|z\|^{-2}
$, by \eqref{dpc} the condition $\tilde F^*\tilde\theta_0=\tilde\theta$ is then equivalent to $\tilde\theta=-d\log D^{\tilde M_c}_p$.

Finally, we prove the equivalence of the third statement.
Clearly, if $\tilde\omega=dd^c\Phi$ has automorphic potential $\Phi=D_p^{\tilde M_c}$, then $\tilde\theta=-d\log D_p^{\tilde M_c}$, and we are back to the second statement. Conversely, since $\omega_H = \|z\|^{-2} dd^c \|z\|^2 $ and again by \eqref{dpc}, the condition $F^*\omega_H=\omega$ is equivalent to $\omega=(D_p^{\tilde M_c})^{-1}dd^cD_p^{\tilde M_c}$.

{
Moreover, if for any $q\in \tilde M$, the analytic extension of the diastasis $D^{\tilde M}_q$ is single valued, then Calabi's global criterion in Th. \ref{thm:calabi-global} applies for $\tilde M$, that is, $\tilde F \colon \tilde M \to \mathds C^N$ is a global K\"ahler immersion. Then it induces a global lcK immersion $F \colon M \to H^N_\lambda$ thanks to the observation in Prop. \ref{prop:equivalent-defi}.
}
\end{proof}

Finally, we show that an analogue of the Tian approximation Theorem \cite{tian-jdg} does not hold in the lcK context.

\begin{prop}\label{prop:non-tian}
In general, it is not true that Vaisman metrics can be approximated by Hopf-induced metrics.
\end{prop}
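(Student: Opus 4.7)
The plan is to establish the proposition by exhibiting a specific Vaisman compact complex manifold $M$ carrying a Vaisman metric $\omega$ such that no lcK metric whatsoever on $M$ is Hopf-induced. This is stronger than needed and immediately yields the claim, since in any reasonable $\mathcal C^k$-topology a sequence $(\omega_k)_k$ of Hopf-induced lcK metrics converging to $\omega$ would have to live on the same complex manifold $M$, contradicting the absence of Hopf-induced metrics there.

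The natural candidates are among the surfaces analyzed in Section \ref{sec:surfaces}: for instance, a primary Kodaira surface or a properly elliptic surface endowed with Belgun's Vaisman metric. As summarized in the introduction and to be established by the case-by-case study of Section \ref{sec:surfaces}, among the Hermitian compact complex surfaces considered there the only ones admitting an lcK immersion into a classical Hopf manifold are the linear Hopf surfaces with the Gauduchon-Ornea metric, and only when the eigenvalues of the generator of the deck action coincide. In particular, properly elliptic and Kodaira surfaces admit no Hopf-induced lcK metric at all.

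Granting this, the proof of the proposition becomes short. Fix such $(M,\omega)$ and suppose, towards a contradiction, that there exists a sequence $(\omega_k)_k$ of Hopf-induced lcK metrics on $M$ with $\omega_k \to \omega$. Each $\omega_k$ is by definition an lcK metric on the fixed complex manifold $M$ which admits a global lcK immersion into some $H^{N_k}_{\lambda_k}$, contradicting the fact that no lcK metric on $M$ is Hopf-induced. Hence $\omega$ cannot be approximated by Hopf-induced metrics on $M$, and the lcK analogue of Tian's density theorem fails.

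The main obstacle is the black box invoked above: proving that a properly elliptic or Kodaira surface admits no Hopf-induced lcK metric at all, not merely that Belgun's Vaisman one fails to be such. The strategy, carried out in Section \ref{sec:surfaces}, is to apply the criterion of Proposition \ref{prop:equivalent-defi-2}: on each candidate surface one picks a minimal K\"ahler covering $\tilde M$ and either shows that $\tilde M$ is not Calabi-resolvable (the matrix of coefficients $(a_{jk})$ of the diastasis expansion is not semipositive definite), or verifies that although $\tilde M$ does admit a K\"ahler immersion into $l^2(\mathds C)$, the corresponding diastasis $D^{\tilde M_c}_p$ does not produce an automorphic potential, i.e.\ the identity $\tilde\theta = -d\log D^{\tilde M_c}_p$ is incompatible with the actual Lee form for every choice of lcK metric on $M$.
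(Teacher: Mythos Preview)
Your argument has a genuine gap that makes it unworkable as stated. You claim that properly elliptic surfaces and Kodaira surfaces admit \emph{no} Hopf-induced lcK metric at all, but this is false: as the paper itself notes at the start of Section~\ref{sec:surfaces}, any compact complex surface with non-negative Kodaira dimension (in particular Kodaira surfaces and properly elliptic surfaces) admits an lcK immersion into a classical Hopf manifold, via the Ornea--Verbitsky embedding and pull-back. What Section~\ref{sec:surfaces} establishes is only that the \emph{specific} Belgun and Cordero--Fern\'andez--de~Le\'on metrics fail to be Hopf-induced. Your last paragraph flags this as the ``main obstacle'' but proposes to resolve it via Proposition~\ref{prop:equivalent-defi-2}; that cannot succeed, because the criterion there depends on the K\"ahler covering metric, which in turn depends on the lcK metric chosen. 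There is no uniform obstruction over all lcK metrics on those manifolds --- indeed, there cannot be, since Hopf-induced metrics exist there.

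The paper's proof is quite different. It takes a diagonal Hopf surface $H_{\alpha,\beta}$ with $\alpha\neq\beta$ but $|\alpha|=|\beta|$, endowed with the Gauduchon--Ornea Vaisman metric $\omega$, and argues by a \emph{convergence} contradiction rather than a blanket non-existence one. If $\omega_m\to\omega$ with each $\omega_m=F_m^*\omega_H$ Hopf-induced, then on the K\"ahler covering the potentials $\tilde F_m^*\|z\|^2$ converge (up to constants) to the automorphic potential $\Phi$ of $\tilde\omega$. Since each $\tilde F_m$ is holomorphic, $\tilde F_m^*\|z\|^2$ has vanishing projection onto the space of real parts of holomorphic functions (this is the characterization of the diastasis, Remark~\ref{charact}). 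But by Propositions~\ref{prop:hopf-surface-ricoprimento} and Corollary~\ref{cor:hopf-surface-quoziente}, the automorphic potential $\Phi$ differs from the diastasis $D^{\tilde M_c}_p$ by a \emph{nonzero} such real part; passing to the limit yields a contradiction. The key idea you are missing is this continuity argument distinguishing the diastasis from a generic automorphic potential, which allows one to rule out approximation without ruling out existence of Hopf-induced metrics on the underlying complex manifold.
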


\begin{proof}
We consider the following counterexample.
Consider the Vaisman metrics on diagonal Hopf surfaces with eigenvalues $\alpha\neq \beta$ such that $|\alpha|=|\beta|$ described in Sect. \ref{sec:surfaces}. We will prove in Prop. \ref{prop:hopf-surface-ricoprimento} and Cor. \ref{cor:hopf-surface-quoziente} that a K\"ahler covering $(\tilde M, \tilde \omega)$ admits a K\"ahler immersion into $l^2(\mathds C)\setminus\{0\}$, but such immersion does not descend to the quotient.
Observe that in this case the metric completion is actually smooth.
We show now that such a metric can not be approximated by Hopf-induced metrics.

Due to Prop. \ref{prop:equivalent-defi-2}, the fact that the immersion does not descend to the quotient means that the automorphic potential of $\tilde \omega$ differs from the diastasis function $D^{\tilde \omega_c}_p$ of the metric completion $(\tilde M^c, \tilde \omega_c)$ by the (non-vanishing) real part of a holomorphic function, $p$ being the completing-one-point.

Assume by contradiction that $\omega_m \to \omega$ where $\omega_m$ are Hopf-induced metrics, {\itshape i.e.} $\omega_m = F_m^* \omega_H$ for some $F_m \colon M \to H^\infty_\lambda$ holomorphic immersions. Then we have $\tilde F_m \colon \tilde M \to l^2(\mathds C) \setminus\{0\}$ holomorphic immersions such that $\tilde F^*\omega_0 = \tilde \omega_m = dd^c \tilde F_m^*\|z\|^2$. On one hand, we have $F_m^* \theta_0 \to \theta$, where $\theta$ is the Lee form of $\omega$; on the other hand, on the K\"ahler coverings, the pull-backs of the Lee forms satisfy $\tilde\theta_m = - d \log \tilde F_m^* \|z\|^2$ and $\tilde\theta = - d \log \Phi$, where $\Phi$ is the automorphic potential of $\tilde \omega$. So we have $F_m^*\|z\|^2 \to \Phi$, up to additive constants. Now we notice that, since $F_m$ are holomorphic maps, the projections of $F_m^*\|z\|^2$ onto the space of real parts of holomorphic function is zero. But the same does not hold true for $\Phi$, because, by assumption, the automorphic potential is not the diastasis of the one-point metric completion of $(\tilde M, \tilde \omega)$ (see Rem. \ref{charact}).
\end{proof}

\section{Hopf-induced Vaisman surfaces}\label{sec:surfaces}

\subsection{The Vaisman question}
In \cite[Th. 1]{belgun}, extending \cite{vaisman-polito, tricerri, gauduchon-ornea}, F. Belgun proves that a compact complex non-K\"ahler surface $X$ is Vaisman if and only if it is an elliptic surface ({\itshape i.e.} a properly elliptic surface, a primary or secondary Kodaira surface, or an elliptic Hopf surface), or it is a Hopf surface of class $1$. Except for some Inoue surfaces (those of type $S^+_{n;p,q,r;u}$ corresponding to a parameter $u\not\in\mathds R$), he constructs lcK metrics on any other compact complex surfaces with first Betti number $b_1$ odd (namely, non-K\"ahler) and Euler characteristic $\chi=0$ (which is the obstruction to be Vaisman).
The missing case of class $\text{VII}^+_0$ surfaces reduces to Kato surfaces under the Spherical Shell conjecture. In \cite[Th. 1]{brunella}, extending previous works by C. LeBrun, and A. Fujiki and M. Pontecorvo, M. Brunella proves that any Kato surface admits an lcK metric. So, under the Spherical Shell conjecture, this provides a complete answer to the Vaisman question \cite[p. 122]{vaisman-polito} on the existence of lcK metrics on compact complex surfaces. In fact, notice that even the non-lcK Inoue surfaces do admit a covering being K\"ahler, see \cite[p. 77]{brunella} and \cite[Rem. 4.5]{pontecorvo}. We refer to \cite{pontecorvo} for a review on the Vaisman question.

\subsection{Hopf-induced lcK metrics}
Of course, any diagonal Hopf surface and any compact complex surface with non-negative Kodaira dimension admits an lcK metric and an lcK immersion into a classical Hopf manifold: indeed, they are Vaisman by \cite{belgun}, and \cite{ornea-verbitsky-MathAnn2, ornea-verbitsky-Cortona} provides such a metric as pull-back of a holomorphic embedding.

In this section, we explicitly study lcK immersions for the known Vaisman metrics in literature, paying also attention to whether a K\"ahler immersion of the K\"ahler covering exists or not. What we prove is the following:
\begin{description}
 \item[Diagonal Hopf surfaces] diagonal Hopf surfaces are Vaisman \cite{belgun}, and Vaisman structures are provided by P. Gauduchon and L. Ornea in \cite{gauduchon-ornea}; for classical Hopf surfaces, in particular, more metrics are provided by M. Parton in \cite{parton-Ampa}. The Gauduchon-Ornea metric turns out to be induced by an lcK immersion into a classical Hopf manifold if and only if the eigenvalues of the generator of the fundamental group {are equal, $\alpha=\beta$; the condition that their norms are equal $|\alpha|=|\beta|$} is necessary for the existence of a K\"ahler immersion of the K\"ahler covering. On classical Hopf manifolds, also the Parton metrics are induced by an immersion, at the level both of the lcK manifold and of the K\"ahler covering.
 \item[Properly elliptic surfaces] properly elliptic surfaces are Vaisman \cite{belgun}. We consider the Vaisman structure constructed in \cite[Sect. 3]{belgun}. We show that, the K\"ahler immersion into $l^2(\mathds{C})$ exists at the level of the K\"ahler covering, but it is not equivariant. Thus the Belgun metric on properly elliptic surfaces is not Hopf-induced.
 \item[Kodaira surfaces] Kodaira surfaces are Vaisman \cite{belgun}. We consider the Vaisman structures constructed in \cite{cordero-fernandez-deleon, vaisman-polito}. At the level of the K\"ahler covering, they are induced by the flat metric on $l^2(\mathds C)$, through a full K\"ahler immersion. But the explicit immersion fixes the origin, up to translation, so it does not descend to the quotient.
 \item[Inoue surfaces] Inoue surfaces are not Vaisman \cite{belgun}. We consider Inoue-Bombieri surfaces of type $S_M$ endowed with the lcK metric by F. Tricerri \cite{tricerri}. Its K\"ahler covering has a local full immersion into $l^2(\mathds C)\setminus\{0\}$ with the flat metric, but the manifold itself does not admit an lcK immersion into the Hopf manifold, {since they are not diffeomorphic to Vaisman manifolds by \cite{belgun}}.
\end{description}
Summarizing, among these examples, Hopf surfaces are the only ones with Hopf-induced lcK metrics, and we notice that the dimension of the ambient space is finite, but not {\itshape a priori} bounded.

Observe that the existence of a K\"ahler immersion of the K\"ahler covering into $l^2(\mathds C)$ is  intersting in its own sake. For example, due to the maximum principle, a complete K\"ahler submanifold of a complex Eucidean space must be noncompact and the only known homogeneous K\"ahler manifolds admitting such an immersion are the complex flat space, the complex hyperbolic space, and  products of them (see \cite{discala-ishi-loi}).

\subsection{Diagonal Hopf surfaces}\label{sec:hopf}
We study when linear Hopf surfaces endowed with the Gauduchon-Ornea metric admit an isometric immersion into an infinite dimensional classical Hopf manifold, and also when their K\"ahler coverings do into $l^2(\mathds C)$.

More precisely, we consider the compact complex surfaces $H_{\alpha,\beta}=\mathds C^2\setminus\{0\} / \mathds Z$ where $\mathds Z$ is generated by $\gamma(z,w)=(\alpha z, \beta w)$ where $\alpha,\beta\in\mathds{C}\setminus\{0\}$ with $|\alpha|\geq|\beta| > 1$, and endowed  with the lcK metric constructed in \cite{gauduchon-ornea}. The K\"ahler covering $\mathds C^2\setminus\{0\}$ has (automorphic) potential $\Phi_{\alpha,\beta}$ that satisfies the equation \cite[Prop. 1, Eq. (10)]{gauduchon-ornea}:
\begin{equation}\label{eqphi}
 |z_1|^2 \Phi_{\alpha,\beta}^{-\frac{2\log|\alpha|}{\log|\alpha|+\log|\beta|}}(z_1,z_2) + |z_2|^2 \Phi_{\alpha,\beta}^{-\frac{2\log|\beta|}{\log|\alpha|+\log|\beta|}}(z_1,z_2) = 1 ,
 \end{equation}
that is, see {\itshape e.g.} \cite[page 126]{ornea-survey},
$$ \Phi_{\alpha,\beta}(z_1,z_2) = \exp\left( \frac{(\log|\alpha|+\log|\beta|)\tau}{2\pi} \right) $$
where $\tau$ is the unique solution of the equation:
$$ \frac{|z_1|^2}{\exp(\tau\log|\alpha|\pi)} + \frac{|z_2|^2}{\exp(\tau\log|\beta|\pi)} = 1 . $$
Note that the above equation is rotation invariant and thus its solution $\tau$ also is. It follows that  $\Phi$ is rotation invariant as well, namely $\Phi(z_1,z_2)=\Phi(|z_1|^2, |z_2|^2)$.

We prove that $H_{\alpha,\beta}$ is Hopf-induced if and only if $|\alpha|=|\beta|$, which in particular includes the classical case $\alpha=\beta$.

We begin with the following lemma.
\begin{prop}\label{prop:hopf-surface-ricoprimento}
Consider on $\mathds{C}^2\setminus\{0\}$ the K\"ahler metric $\omega_{\alpha,\beta}$ induced by the rotation invariant potential $\Phi$ which satisfies \eqref{eqphi}.
Then $(\mathds{C}^2\setminus\{0\},\omega_{\alpha,\beta})$ admits a local K\"ahler immersion into $l^2(\mathds{C})$ if and only if $|\alpha|=|\beta|$.
Moreover, the immersion is full in $\mathds C^2$.
\end{prop}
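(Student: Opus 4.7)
\medskip

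The ``if'' direction is immediate. When $|\alpha|=|\beta|$, both exponents in \eqref{eqphi} equal $1$, so the unique rotation-invariant solution is $\Phi(z_1,z_2)=|z_1|^2+|z_2|^2$. Hence $\omega_{\alpha,\beta}=dd^c\Phi$ coincides with the restriction of the flat K\"ahler metric on $\mathds{C}^2$ to $\mathds{C}^2\setminus\{0\}$, and the inclusion $\mathds{C}^2\setminus\{0\}\hookrightarrow\mathds{C}^2$ is a K\"ahler immersion, manifestly full in $\mathds{C}^2$ since its image is open.

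For the converse, my plan is to suppose a K\"ahler immersion $\tilde F\colon\mathds{C}^2\setminus\{0\}\to l^2(\mathds{C})$ exists and deduce $|\alpha|=|\beta|$ by a Hartogs-extension argument that forces $\Phi$ to be real-analytic across the origin. Since $\mathds{C}^2\setminus\{0\}$ is simply connected, the pulled-back K\"ahler potential $\|\tilde F\|^2$ differs from $\Phi$ by $\mathrm{Re}(h)$ for some holomorphic function $h\colon \mathds{C}^2\setminus\{0\}\to\mathds{C}$. By Hartogs' extension theorem, valid because $\dim_{\mathds{C}}\geq 2$, both $h$ and each component $\tilde F_k$ of $\tilde F$ extend holomorphically to $\mathds{C}^2$. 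Standard facts about $l^2$-valued holomorphic maps then imply that $\|\tilde F\|^2=\sum_k|\tilde F_k|^2$ is real-analytic on all of $\mathds{C}^2$, and hence so is $\Phi=\|\tilde F\|^2-\mathrm{Re}(h)$.

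Next I extract the numerical constraint by restricting \eqref{eqphi} to the coordinate axes. Setting $z_2=0$ in \eqref{eqphi} yields $\Phi(z_1,0)=|z_1|^{2/e_1}$, where $e_1=\frac{2\log|\alpha|}{\log|\alpha|+\log|\beta|}$; analogously, $\Phi(0,z_2)=|z_2|^{2/e_2}$. Real-analyticity at the origin of the radial functions $(x^2+y^2)^{1/e_i}$ forces $1/e_1,1/e_2\in\mathds{N}^{>0}$. Writing $1/e_1=p$ and $1/e_2=q$, the identity $e_1+e_2=2$ becomes $1/p+1/q=2$ with $p,q\in\mathds{N}^{>0}$, whose only solution is $p=q=1$. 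Therefore $e_1=e_2=1$, which means $\log|\alpha|=\log|\beta|$, i.e., $|\alpha|=|\beta|$. Fullness in $\mathds{C}^2$ then follows as in the ``if'' direction, since the resulting immersion has image of complex dimension $2$.

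The expected main obstacle is the technical step establishing real-analyticity of $\|\tilde F\|^2$ across the origin. While Hartogs' theorem readily extends each individual component $\tilde F_k$, the infinite series $\sum_k|\tilde F_k|^2$ needs control of the Taylor coefficients of $\tilde F$ in the $l^2$-norm together with absolute convergence of the formal squared-norm expansion on a polydisc around $0$; this is standard for $l^2(\mathds{C})$-valued holomorphic maps (using that $\|\tilde F\|^2=\Phi+\mathrm{Re}(h)$ is bounded near $0$, so the Taylor coefficients of $\tilde F$ centered at $0$ lie in $l^2$ with summable norms) but must be invoked explicitly.
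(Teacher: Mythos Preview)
Your argument is correct and takes a genuinely different route from the paper's. One small omission: the statement concerns \emph{local} K\"ahler immersions, but you open the converse by positing a global immersion $\tilde F\colon\mathds{C}^2\setminus\{0\}\to l^2(\mathds{C})$. You should first invoke Calabi's theory (simple connectivity of $\mathds{C}^2\setminus\{0\}$ plus Theorem~\ref{thm:calabi-global}) to pass from local resolvability to a global map before applying Hartogs.

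The paper's proof is purely local and computational: it works at a point $(0,s)$ with $s\neq 0$, uses rotation invariance of $\Phi$ to see that the matrix $(a_{jk})$ in \eqref{diastexp} is diagonal, and obtains the closed formula
\[
\left.\frac{\partial^{2j}}{\partial z_1^j\partial \bar z_1^j}\Phi\right|_{z_1=0,\,z_2=s}
=\frac{\Phi(0,|s|^2)^{j(b-a)+1}}{|s|^{2j}b^{j}}\prod_{k=1}^{j-1}(kb+1-ja),
\]
then checks that for $a\neq b$ this product is negative for suitable $j$; Calabi's local criterion (Theorem~\ref{localcriterion}) then excludes the immersion. Your approach is more structural: Hartogs extension forces $\Phi$ to be real-analytic across $0\in\mathds{C}^2$, and restriction to the axes reduces the question to the Diophantine constraint $1/p+1/q=2$ with $p,q\in\mathds{N}^{>0}$. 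The paper's method stays entirely inside Calabi's framework and pinpoints exactly which diagonal coefficient violates semipositivity; yours avoids all derivative bookkeeping but trades it for the Hilbert-space lemma (real-analyticity of $\|\tilde F\|^2$ across the added point), which, as you rightly flag, is the one step requiring care.
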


\begin{proof}
If $|\alpha|=|\beta|$, then \eqref{eqphi} gives:
$$
\Phi = |z_1|^2+|z_2|^2,
$$
and $\omega_{\alpha,\beta}$ is the flat metric on $\mathds{C}^2\setminus \{0\}$, which trivially admits a K\"ahler immersion into $\mathds{C}^2$. 

In order to deal with the "only if" part, let us write:
$$
a=\frac{2\log|\alpha|}{\log|\alpha|+\log|\beta|},\qquad b=\frac{2\log|\beta|}{\log|\alpha|+\log|\beta|}.
$$
By computing:
$$
 \left. \frac{\partial^{2j}}{\partial  z_1^j\partial\bar z_1^j}\left(|z_1|^2\Phi (|z_1|^2,|z_2|^2)^{-a}+|z_2|^2\Phi(|z_1|^2,|z_2|^2)^{-b}\right)\right|_{z_1=0,z_2=s} = 0,
$$
for $s\in \mathds{C}\setminus \{0\}$, one gets:
$$
\left. \frac{\partial^2}{\partial z_1\partial \bar z_1}\Phi(|z_1|^2,|z_2|^2)\right|_{z_1=0,z_2=s}=\frac{\Phi(0,|s|^2)^{b-a+1}}{|s|^{2}b},
$$
 and:
\begin{equation}\label{derivate}
\left.\frac{\partial^{2j}}{\partial z_1^j\partial \bar z_1^j}\Phi(|z_1|^2,|z_2|^2)\right|_{z_1=0,z_2=s}=\frac{\Phi(0,|s|^2)^{j(b-a)+1}}{|s|^{2j}b^{j}}\prod_{k=1}^{j-1}(kb+1-ja),\nonumber
\end{equation}
for $j\geq 2$.
We claim that for any value of $a$ and $b$ such that $a\neq b$, there exists $j$ big enough such that $\left.\frac{\partial^{2j}}{\partial z_1^j\partial \bar z_1^j}\Phi(|z_1|^2,|z_2|^2)\right|_{z_1=0,z_2=s}<0$.
If the claim holds true, then for any $|\alpha|\neq |\beta|$,  by Calabi's criterion, $(\mathds C^2\setminus\{0\},\omega_{\alpha,\beta})$, does not admit a local K\"ahler immersion into $l^2(\mathds{C})$. In fact, since $\Phi$ depends only on the modules of the variables, the matrix $(a_{jk})$ given by \eqref{diastexp} is diagonal and $\left.\frac{\partial^{2j}}{\partial z_1^j\partial \bar z_1^j}\Phi(|z_1|^2,|z_2|^2)\right|_{z_1=0,z_2=s}$ are eigenvalues.
In order to prove the claim, observe that $\left.\frac{\partial^{2j}}{\partial z_1^j\partial \bar z_1^j}\Phi(|z_1|^2,|z_2|^2)\right|_{z_1=0,z_2=s}<0$ if and only if $\prod_{k=1}^{j-1}(kb+1-ja)<0$. Without loss of generality, we can assume, by symmetry, that $a>1>b$. The claim holds for any positive even integer $j$, by noting that the sequence of factors is increasing in $k$ and that the last term is $(j-1)b+1-ja=-j(a-b)+(1-b)$. 
\end{proof}

{When the K\"ahler immersion exists, then it is the identity on the image up to rotations. Then it is equivariant with respect to $(z,w) \stackrel{\gamma}{\mapsto} (\alpha z, \beta w)$ and $(z,w)\mapsto (\lambda z,\lambda w)$ if and only if $\alpha=\beta$ and $|\alpha|=|\beta|=\lambda$:}
\begin{cor}\label{cor:hopf-surface-quoziente}
Let $H_{\alpha,\beta}=\mathds C\setminus\{0\} / \mathds Z$ be a linear Hopf manifold, where $\mathds Z$ is generated by $\gamma(z,w) = (\alpha z, \beta w)$.
Then $H_{\alpha,\beta}$ admits a local lcK immersion into $H^\infty_\lambda$ if and only if {$\alpha=\beta$, and in this case $\lambda=|\alpha|=|\beta|$.}
Moreover, the immersion is full in $H^2_{\lambda}$.
\end{cor}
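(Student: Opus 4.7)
The plan is to translate ``$H_{\alpha,\beta}$ admits an lcK immersion into $H^\infty_\lambda$'' into the existence of an equivariant K\"ahler immersion of the K\"ahler covering $(\mathds C^2\setminus\{0\},\tilde\omega_{\alpha,\beta})$ into $(l^2(\mathds C),\omega_0)$ via Prop. \ref{prop:equivalent-defi}, and then to combine Prop. \ref{prop:hopf-surface-ricoprimento} (which already rules out $|\alpha|\neq|\beta|$) with Calabi's rigidity Th. \ref{Calabirig} to upgrade the necessary condition from $|\alpha|=|\beta|$ to $\alpha=\beta$.

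For the ``only if'' direction, I first recall that by Prop. \ref{prop:hopf-surface-ricoprimento} any K\"ahler immersion of the covering into $l^2(\mathds C)$ forces $|\alpha|=|\beta|$, and in this regime equation \eqref{eqphi} reduces to $\Phi_{\alpha,\beta}=|z_1|^2+|z_2|^2$, so $\tilde\omega_{\alpha,\beta}$ is simply the flat metric $\omega_0$ on $\mathds C^2\setminus\{0\}$. Next I use Calabi's rigidity to write any K\"ahler immersion $\tilde F\colon(\mathds C^2\setminus\{0\},\omega_0)\to(l^2(\mathds C),\omega_0)$ compatible with the Lee forms as $\tilde F=U\circ\iota$, where $\iota(z,w)=(z,w,0,0,\ldots)$ is the canonical inclusion and $U$ is a unitary transformation of $l^2(\mathds C)$; the Lee-form condition $\tilde F^*\tilde\theta_0=\tilde\theta$ of Prop. \ref{prop:equivalent-defi} is what forces the translational part to vanish, so $\tilde F$ is in fact linear. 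Setting $v_1=U(e_1)$ and $v_2=U(e_2)$, the equivariance $\tilde F\circ\gamma=\zeta\circ\tilde F$ with $\zeta=\lambda^k\cdot\mathrm{id}$ a deck transformation of $H^\infty_\lambda$ rewrites as
$$
\alpha z\, v_1+\beta w\, v_2 \;=\; \lambda^k z\, v_1+\lambda^k w\, v_2\qquad\text{for every }(z,w)\in\mathds C^2.
$$
Linear independence of $v_1$ and $v_2$ then forces $\alpha=\lambda^k=\beta$; choosing $k=1$ so that $\lambda$ generates the Hopf deck group yields $\lambda=\alpha=\beta$, whence in particular $|\lambda|=|\alpha|=|\beta|$.

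For the converse, when $\alpha=\beta$ I simply take $\lambda=\alpha$; the inclusion $\iota$ is then manifestly equivariant with respect to the two scalar actions $(z,w)\mapsto(\alpha z,\alpha w)$ and $z\mapsto\lambda z$, and by Prop. \ref{prop:equivalent-defi} descends to a global lcK immersion $F\colon H_{\alpha,\alpha}=H^2_\alpha\to H^\infty_\lambda$ whose image is the embedded Hopf submanifold $H^2_\lambda$. Since this image is two-dimensional and not contained in any lower-dimensional complex totally geodesic submanifold of $H^\infty_\lambda$, the immersion is full in $H^2_\lambda$.

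The main obstacle I anticipate is making the rigidity step rigorous: Calabi's rigidity Th. \ref{Calabirig} is stated for \emph{full} immersions, whereas $\iota$ is not full in $l^2(\mathds C)$. I plan to circumvent this by first invoking rigidity on the minimal ambient $\mathds C^2$ (where the immersion is a unitary self-map, hence full), and then extending the resulting unitary of $\mathds C^2$ to a unitary of $l^2(\mathds C)$ arbitrarily; the ambiguity in the extension lives in the orthogonal complement of the image, which does not interact with the scalar transformation $\zeta$ in the equivariance computation.
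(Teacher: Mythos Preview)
Your approach is correct and matches the paper's: both reduce via Prop.~\ref{prop:hopf-surface-ricoprimento} to the case $|\alpha|=|\beta|$ where the covering metric is flat, invoke Calabi's rigidity to identify the K\"ahler immersion with the standard inclusion up to a unitary, and then read off $\alpha=\beta$ from the equivariance constraint with the scalar action $\zeta=\lambda\cdot\mathrm{id}$. The paper's proof is the one-sentence remark immediately preceding the corollary, so your version simply fills in the details (including the fullness caveat for Th.~\ref{Calabirig} that you correctly anticipated and resolved).
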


As second example, we consider classical Hopf surfaces endowed with the following metrics constructed by M. Parton in \cite{parton-Ampa}.

For $k\in \mathds{R}^+$, consider the family of Vaisman metrics on $\mathds C^2\setminus\{0\}$ given by:
\begin{equation}
\begin{split}
\omega_k=\frac{1}{(|z_1|^2+|z_2|^2)^2}\left[(\right.&k|z_1|^2+|z_2|^2)dz_1\wedge d\bar z_1+(k-1)z_2\bar z_1 dz_1\wedge d\bar z_2\\
&\left.+(k-1)z_1\bar z_2 dz_2\wedge d\bar z_1+(|z_1|^2+k|z_2|^2)dz_2\wedge d\bar z_2\right].
\end{split}\nonumber 
\end{equation}
They descend to the quotient $H_{\alpha,\beta}$ as soon as $\alpha=\beta$, generalizing the metrics $\omega_{\alpha,\alpha}$.
The metric:
$$
\tilde\omega_k=(|z_1|^2+|z_2|^2)^k\omega_k,
$$
is K\"ahler on $\mathds{C}^2\setminus\{0\}$ with K\"ahler potential $\Phi_k=\frac1k(|z_1|^2+|z_2|^2)^k$.
When $k$ is a positive integer, $\tilde \omega_k$ is defined on the whole $\mathds{C}^2$ and the potential $\Phi_k$ is actually the diastasis function centered at the origin for $\tilde\omega_k$.
It follows that the diastasis is an automorphic potential for $\omega_k$. Further, it is not hard to see that, for any positive integer value of $k$, then $(\mathds{C}^2,\tilde\omega_k)$ is resolvable of rank $k+1$ and thus it admits a full K\"ahler immersion into $(\mathds{C}^{k+1},\omega_0)$. By Prop. \ref{prop:equivalent-defi}, $(\mathds{C}^2\setminus\{0\}/\mathds{Z},\omega_k)$  admits a full lcK immersion into $H^{k+1}_\lambda$ with $\lambda=k^k$.

Further, the K\"ahler map can be written explicitely by:
$$
\tilde F\colon \mathds{C}^2\to \mathds{C}^{k+1},\quad \tilde F=(\tilde F_0,\dots, \tilde F_k), \ \ \tilde F_j=\sqrt{\frac{1}{k} \binom{k}{j}}z_1^{k-j}z_2^{j}.
$$ 

\begin{rem}
We notice that the dimension of the ambient space for lcK immersions does not depend just on the holomorphic structure of the lcK manifold. In fact, $(H_{\alpha,\alpha},\omega_k)$ admits an lcK immersion into a Hopf manifold of dimension $k+1$.
\end{rem}

\subsection{Properly elliptic surfaces}
Let $D:=\{(x, y)\in \mathds C^2 \;:\; \ \Im(x\bar y)>0\}$. By \cite[Prop. 2]{belgun} any minimal properly elliptic surface $X$ is diffeomorphic as a complex manifold to the quotient of $D$ by the free and proper action of a discrete subgroup $\Gamma$ of $G:=\mathds C^*\cdot \mathrm{SL}(2,\mathds R)\subset GL(2,\mathds C)$. We consider the lcK form $\omega$ on $X$ given by $\omega=\frac12{\Im}(x\bar y)dd^c({\Im}(x\bar y)^{-1})$ and its K\"ahler covering is $(D,\tilde \omega)$, where $\tilde \omega$ is described by the (globally) defined K\"ahler potential $\Phi:={\Im}(x\bar y)^{-1}$.
We have the following:
\begin{prop}\label{pes}
The K\"ahler manifold $(D,\tilde \omega)$ admits a K\"ahler immersion into $l^2(\mathds C)$.
\end{prop}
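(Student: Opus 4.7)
My plan is to exhibit an explicit holomorphic K\"ahler immersion $F\colon D\to l^2(\mathds C)$, built directly from a series expansion of $\Phi$ in suitable coordinates. The main idea is that the linear change of variables $\xi:=x+iy$, $\eta:=x-iy$ turns $D$ into a Siegel-type domain in which $\Phi$ takes a transparent shape.

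First, I would verify by a direct computation that
\[
|\xi|^{2}-|\eta|^{2} \;=\; 4\,\Im(x\bar y),
\]
so that in the new coordinates $D=\{(\xi,\eta)\in\mathds C^2 : |\xi|>|\eta|\}$; in particular $\xi\neq 0$ throughout $D$, and the potential becomes $\Phi = 4/(|\xi|^{2}-|\eta|^{2})$. Since $|\eta/\xi|<1$ on $D$, the geometric series yields, with pointwise absolute convergence,
\[
\Phi \;=\; \frac{4}{|\xi|^{2}}\sum_{k=0}^{\infty}\!\left(\frac{|\eta|^{2}}{|\xi|^{2}}\right)^{\!k} \;=\; \sum_{k=0}^{\infty}\left|\frac{2\,\eta^{k}}{\xi^{k+1}}\right|^{2}.
\]

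This immediately suggests defining $F\colon D\to l^{2}(\mathds C)$ by $F_{k}(\xi,\eta):=2\,\eta^{k}\,\xi^{-(k+1)}$ for $k\geq 0$. Each $F_k$ is holomorphic on $D$ (as $\xi\neq 0$ there), and the expansion above shows both that the image of $F$ lies in $l^{2}(\mathds C)$ and that $\|F\|^{2}=\Phi$ pointwise. Applying $dd^{c}$ to $F^{*}\|z\|^{2}=\|F\|^{2}=\Phi$ (compare also the Hereditary Property, Prop.~\ref{induceddiast}) then gives $F^{*}\omega_{0}=dd^{c}\Phi=\tilde\omega$, so $F$ is a K\"ahler map; since $\tilde\omega$ is non-degenerate, $dF$ must be injective, hence $F$ is an immersion.

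I do not foresee any substantial obstacle: the crux is spotting the change of variables that diagonalises the squared modulus, after which the series expansion is forced upon us. One subtlety worth flagging is that $D$ fibers non-trivially over $\mathds C^*$ via $(\xi,\eta)\mapsto\xi$ and so is not simply connected; however, since the components $F_k$ are Laurent monomials in $\xi,\eta$ that are well-defined everywhere on $\{\xi\neq 0\}$, the immersion $F$ is globally single-valued on $D$ itself without the need to pass to a further covering. As a byproduct, the $F_k$'s being pairwise linearly independent, the immersion is in fact full in $l^{2}(\mathds C)$.
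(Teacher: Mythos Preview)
Your argument is correct and follows the same underlying idea as the paper: express $\Phi$ as a convergent sum $\sum_k |f_k|^2$ of squared moduli of holomorphic functions via a geometric series, and read off the immersion $F=(f_k)_k$. Your preliminary change of variables $(\xi,\eta)=(x+\sqrt{-1}y,\,x-\sqrt{-1}y)$ makes the expansion transparent and leads to the components $F_k=2\,\eta^k\xi^{-(k+1)}$; the paper stays in the original coordinates and writes the $j$th component as a power of $\dfrac{z_1+\sqrt{-1}z_2}{z_1-\sqrt{-1}z_2}$ divided by $z_1-\sqrt{-1}z_2$, which is the same construction up to relabeling (and, in the paper as printed, a swap of numerator and denominator).

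There is one point where your route genuinely improves on the paper's. The paper produces its map only on $D\setminus\{z_1=\sqrt{-1}z_2\}$ and then extends globally by appealing to simple connectivity of $D$. You correctly observe that $D\cong\{|\xi|>|\eta|\}\cong\mathds C^*\times\mathds B$ via $(\xi,\eta)\mapsto(\xi,\eta/\xi)$, hence $\pi_1(D)\cong\mathds Z$, so $D$ is \emph{not} simply connected and that extension argument is not available as stated. Your map, by contrast, is built from Laurent monomials in $\xi,\eta$ that are single-valued on all of $\{\xi\neq 0\}\supset D$, so you obtain a global immersion directly, without any covering argument. This is a cleaner and more robust way to reach the same conclusion.
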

\begin{proof}
A local K\"ahler immersion $\varphi \!:D\setminus\{z_1=\sqrt{-1}z_2\}\rightarrow l^2(\mathds C)$ of $(D,\tilde \omega)$ into $l^2(\mathds C)$ is given by:
$$
\varphi(z_1, z_2)= \frac{1}{z_1-\sqrt{-1}z_2}\left(\dots , \left(\frac{z_1+\sqrt{-1}z_2}{z_1-\sqrt{-1}z_2}\right)^j,\dots \right),
$$
for $j\in \mathds N$.
Indeed:
\begin{equation}
\begin{split}
\varphi^*(\omega_0)=&\frac{\sqrt{-1}}{2}\partial\bar\partial \left[ \frac{1}{|z_1-\sqrt{-1}z_2|^2}\sum_{j=0}^{\infty}|\frac{z_1+\sqrt{-1}z_2}{z_1-\sqrt{-1}z_2}|^{2j}\right]\\
=&
\frac{\sqrt{-1}}{2}\partial\bar\partial \left[ \frac{1}{\left|z_1-\sqrt{-1}z_2\right|^2}\cdot\frac{1}{1-\left|\frac{z_1+\sqrt{-1}z_2}{z_1-\sqrt{-1}z_2}\right|^{2}}\right]\\
=&\frac{\sqrt{-1}}{2}\partial\bar\partial\left[ \frac{1}{\left|z_1-\sqrt{-1}z_2\right|^2-\left|z_1+\sqrt{-1}z_2\right|^2}\right]\\
=&\frac{\sqrt{-1}}{2}\partial\bar\partial \left[\Im (z_1\bar z_2)^{-1}\right].
\end{split}\nonumber
\end{equation}
Observe that, since $D$ is simply connected, the local K\"ahler immersion can be extended to a global one.
\end{proof}

Then we have:
\begin{cor}
$X$ does not admit an lcK embedding into a classical Hopf manifold.
\end{cor}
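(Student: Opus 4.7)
My plan is to argue by contradiction using Calabi's rigidity Th. \ref{Calabirig} applied to the immersion $\varphi$ of Prop. \ref{pes}. Suppose $X$ admits a global lcK embedding $F\colon X\to H^N_\lambda$ for some $N$ and $\lambda$. Then Prop. \ref{prop:equivalent-defi} produces a K\"ahler immersion $\tilde F\colon D\to \mathds C^N$ which is $\Gamma$--equivariant in the sense that, for each $\gamma \in \Gamma$, there exists $k(\gamma)\in \mathds Z$ with $\tilde F\circ\gamma = \lambda^{k(\gamma)}\tilde F$.

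The next step is to identify $\tilde F$ via rigidity. Setting $u := z_1 - \sqrt{-1}z_2$ and $v := z_1 + \sqrt{-1}z_2$, the coordinate components of $\varphi$ are the meromorphic functions $u^{-(j+1)}v^j$ for $j \geq 0$, which are linearly independent. Hence the resolvability rank of $(D,\tilde\omega)$ is infinite and $\varphi$ is full in $l^2(\mathds C)$; Calabi's local criterion Th. \ref{localcriterion} then forces $N=\infty$, and Calabi's rigidity Th. \ref{Calabirig} gives $\tilde F=U\circ\varphi$ for some unitary $U$ of $l^2(\mathds C)$. Since $U$ is linear, the equivariance of $\tilde F$ reduces to the pointwise equality $\varphi\circ\gamma = \lambda^{k(\gamma)}\varphi$ for every $\gamma\in \Gamma$.

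I then compute $\varphi\circ\gamma$ explicitly for $\gamma=tA$ with $t\in\mathds C^*$ and $A=\bigl(\begin{smallmatrix}a&b\\c&d\end{smallmatrix}\bigr)\in \mathrm{SL}(2,\mathds R)$. A direct change of variables shows that $A$ acts on $(u,v)$ by the linear transformation with matrix $\tfrac{1}{2}\bigl(\begin{smallmatrix}\alpha&\beta\\\bar\beta&\bar\alpha\end{smallmatrix}\bigr)$, where $\alpha=a+d+\sqrt{-1}(b-c)$ and $\beta=a-d-\sqrt{-1}(b+c)$ satisfy $|\alpha|^2-|\beta|^2=4$; in particular, the ratio $\zeta:=v/u$ transforms by a M\"obius map that is trivial only when $A=\pm I$. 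Matching components in the relation $\varphi\circ\gamma=\mu\varphi$, $\mu\in\mathds C^*$, forces $\beta=0$ and $\bar\alpha=\alpha$, which together imply $A=\pm I$. Consequently, equivariance forces $\Gamma\subset \mathds C^*\cdot\{\pm I\}$.

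This is incompatible with $X$ being a properly elliptic surface, since an elliptic fibration over a hyperbolic base requires the projection of $\Gamma$ to $\mathrm{PSL}(2,\mathds R)$ to be a cocompact Fuchsian group, hence nonabelian and containing elements with $A\neq \pm I$. Thus no such $F$ can exist. The main subtle point is the final step, where the geometric structure of properly elliptic surfaces is needed to exclude the algebraic constraint $\Gamma\subset \mathds C^*\cdot\{\pm I\}$ imposed by the rigidity analysis.
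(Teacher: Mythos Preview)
Your argument is correct and follows the same overall strategy as the paper: invoke Calabi rigidity (Th.~\ref{Calabirig}) to identify any K\"ahler immersion of $(D,\tilde\omega)$ with $U\circ\varphi$ for some unitary $U$, and then show this explicit form is incompatible with $\Gamma$-equivariance. Where the two diverge is in the final obstruction. The paper observes that each component $\psi_j$ factors as $\frac{1}{u}$ times a function of $v/u$ alone; the latter is invariant under the central scalars $t\cdot\mathrm{Id}\in\Gamma$ while $\frac{1}{u}\mapsto t^{-1}\frac{1}{u}$, and the contradiction is drawn from this behaviour on the scalar part of $\Gamma$. You instead analyse $\varphi\circ(tA)$ for arbitrary $A\in\mathrm{SL}(2,\mathds R)$, show that $\varphi\circ(tA)=\mu\varphi$ forces $\beta=0$ and $\alpha\in\mathds R$, hence $A=\pm I$, and conclude $\Gamma\subset\mathds C^*\cdot\{\pm I\}$, which contradicts the cocompact Fuchsian image of $\Gamma$ in $\mathrm{PSL}(2,\mathds R)$ coming from the hyperbolic base of the elliptic fibration. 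Your route is more explicit, yields the sharper structural conclusion on $\Gamma$, and makes the role of the $\mathrm{SL}(2,\mathds R)$ factor transparent; the paper's route is terser and uses only the centre, at the price of leaving the rank argument for the central lattice largely implicit.
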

\begin{proof}
By Calabi's rigidity Theorem (Th. \ref{Calabirig} above) a global K\"ahler immersion $\psi \!:D\rightarrow l^2(\mathds C)$ is given in $D\setminus\{z_1=\sqrt{-1}z_2\}$ by $U\circ \varphi$ where $\varphi$ is the map given in Prop. \ref{pes} and $U$ is a unitary transformation of $l^2(\mathds C)$. It follows that each entry $\psi_j$ of $\psi$ is a linear combination of $\left(\frac{z_1+\sqrt{-1}z_2}{z_1-\sqrt{-1}z_2}\right)^j$, $j\in \mathds N$, multiplied by $\frac{1}{z_1-\sqrt{-1}z_2}$. Conclusion follows by noticing that  $\left(\frac{z_1+\sqrt{-1}z_2}{z_1-\sqrt{-1}z_2}\right)^j$ (and thus any linear combination of it) is invariant by the action of $\mathds Z^*\cdot {\rm Id}\subset \Gamma$ while $\frac{1}{z_1-\sqrt{-1}z_2}$ is not.
\end{proof}

\subsection{Kodaira surfaces}
It is an open problem to determine which {\em nilmanifolds} (namely, compact quotients of connected simply connected nilpotent Lie group by cocompact discrete subgrous) admit lcK metrics. L. Ugarte \cite{ugarte} conjectured that necessarily the group is isomorphic to the product of the Heisenberg group times $\mathds R$; G. Bazzoni \cite{bazzoni} proved that the conjecture is true under the further assumption of Vaisman. The lowest dimensional case, that is, when the group is $\mathrm{Heis}(3;\mathds R) \times \mathds R$, the corresponding manifolds are the (primary) {\em Kodaira surfaces} \cite{kodaira-Surfaces}. They are characterized by Kodaira dimension $0$, odd first Betti number, trivial canonical bundle.

Vaisman metrics on (primary) Kodaira surfaces are explicitly studied in \cite{cordero-fernandez-deleon, vaisman-polito} and are defined as follows.
With respect to coordinates:
$$ (x,y,t,u)=\left( \left(\begin{matrix}1 & x & t \\ & 1 & y \\ & & 1 \end{matrix}\right) , u \right) \in \mathrm{Heis}(3;\mathds R) \times \mathds R, $$
consider the holomorphic coordinates on the universal cover:
$$ z=\frac{1}{\sqrt{2}}(x+\sqrt{-1}y), \qquad  w = t - \frac{1}{2}xy -\frac{\sqrt{-1}}{4} (x^2+y^2)+\sqrt{-1}u . $$
The Vaisman lcK metric reads:
$$ \omega = 2 dz \wedge d\bar z + \frac{1}{2}(dw+\sqrt{-1}\bar z dz)\wedge (d\bar w-\sqrt{-1}z d\bar z) , $$
while on the K\"ahler covering one has the K\"ahler metric:
$$ \tilde \omega =  e^{u/2} \left( 2 dz \wedge d\bar z + \frac{1}{2}(dw+\sqrt{-1}\bar z dz)\wedge (d\bar w-\sqrt{-1}z d\bar z) \right) . $$

A K\"ahler potential for $\tilde \omega$ is given by $\Phi(z,w)=8 e^{u/2}$, as can be easily seen by noticing that:
$$
u=\frac12|z|^2-\frac{\sqrt{-1}}2(w-\bar w),
$$
and thus:
$$
\frac{\partial^2}{\partial z\partial\bar z}e^{u/2}=\frac1{16}e^{u/2}(|z|^2+4),\qquad \frac{\partial^2}{\partial w\partial\bar w}e^{u/2}=\frac1{16}e^{u/2},
$$
$$
\frac{\partial^2}{\partial z\partial\bar w}e^{u/2}=\frac{\sqrt{-1}}{16}e^{u/2}\bar z,\qquad \frac{\partial^2}{\partial w\partial\bar z}e^{u/2}=-\frac{\sqrt{-1}}{16}e^{u/2} z.
$$
Note that the lcK metric has automorphic potential: $\omega \stackrel{\text{loc}}{=} 4e^{-u/2} dd^c(e^{u/2})$.

\begin{prop}\label{lemmma:kodaira-covering}
Let $\tilde \omega$ be the K\"ahler metric on $\mathds{C}^2$ defined by the K\"ahler potential $\Phi(z,w)=\exp(\frac14|z|^2-\frac{\sqrt{-1}}4(w-\bar w))$. Then $(\mathds{C}^2,\omega)$ admits a global full K\"ahler immersion into $l^2(\mathds{C})$.
\end{prop}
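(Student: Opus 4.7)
The plan is to exhibit an explicit K\"ahler immersion $F\colon\mathds{C}^2\to l^2(\mathds{C})$ by decomposing the potential as a sum of squared moduli of holomorphic functions, and then to invoke Calabi's global criterion Th.~\ref{thm:calabi-global}.

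The key observation would be that $-\tfrac{\sqrt{-1}}{4}(w-\bar w)=\bigl(-\tfrac{\sqrt{-1}}{4}w\bigr)+\overline{\bigl(-\tfrac{\sqrt{-1}}{4}w\bigr)}$, so the potential factorises as
$$\Phi(z,w)=\exp\left(\tfrac{1}{4}|z|^2\right)\cdot\left|\exp\left(-\tfrac{\sqrt{-1}}{4}w\right)\right|^2.$$
Using $\exp(|z|^2/4)=\sum_{l\geq 0}|z|^{2l}/(4^l\,l!)$, this rearranges naturally into an $l^2$-type sum, suggesting the candidate immersion
$$F_0(z,w)=\exp\bigl(-\tfrac{\sqrt{-1}}{4}w\bigr)-1,\qquad F_l(z,w)=\frac{z^l}{2^l\sqrt{l!}}\exp\bigl(-\tfrac{\sqrt{-1}}{4}w\bigr),\quad l\geq 1.$$

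The second step is to verify directly that $\|F\|^2=\Phi-\exp(-\sqrt{-1}w/4)-\exp(\sqrt{-1}\bar w/4)+1$, which reduces to the telescoping identity $\sum_{l\geq 1}|z|^{2l}/(4^l\,l!)=\exp(|z|^2/4)-1$. Since the discrepancy $1-\exp(-\sqrt{-1}w/4)-\exp(\sqrt{-1}\bar w/4)$ is the real part of the holomorphic function $1-2\exp(-\sqrt{-1}w/4)$, the potentials $\|F\|^2$ and $\Phi$ define the same K\"ahler metric by Rem.~\ref{charact}; in particular $F^*\omega_0=\sqrt{-1}\,\partial\bar\partial\|F\|^2=\tilde\omega$, and $\|F\|^2$ is in fact the diastasis $D_0$ centered at the origin.

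Finally, since $\mathds{C}^2$ is simply connected the diastasis is automatically single-valued, so Calabi's global criterion guarantees that $F$ is a global immersion. For fullness, the functions $\{F_l\}_{l\geq 0}$ are $\mathds{C}$-linearly independent modulo constants in $\mathcal{O}(\mathds{C}^2)$---visible from inspecting the coefficient of $z^l$ for each $l$---so the coefficient matrix $(a_{jk})$ of the power series expansion of $D_0$ has infinite rank, and hence the image of $F$ is not contained in any proper totally geodesic complex submanifold of $l^2(\mathds{C})$. The only non-routine step is spotting the initial factorization of $\Phi$; once that is in hand, everything else reduces to bookkeeping of the Taylor expansion.
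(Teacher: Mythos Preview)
Your proof is correct and essentially identical to the paper's: both rest on the factorization $\Phi=\exp(|z|^2/4)\,|\exp(-\sqrt{-1}w/4)|^2$ and the resulting map with components $z^l\exp(-\sqrt{-1}w/4)/(2^l\sqrt{l!})$. The only difference is that the paper keeps $F_0=\exp(-\sqrt{-1}w/4)$ (so that $\|F\|^2=\Phi$ on the nose), whereas you subtract $1$ to force $F(0,0)=0$; this costs you the holomorphic correction term but gains nothing, and the appeal to Calabi's global criterion is unnecessary since your $F$ is already globally defined.
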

\begin{proof}
It is easy to see that the map $F\colon \mathds{C}^2\to l^2(\mathds{C})$ given by:
\begin{equation}\label{kodmap}
F(z,w)=\left(\dots, \frac{z^j}{2^j\sqrt{j!}}\exp\left(-\frac{\sqrt{-1}}{4}w\right),\dots\right),
\end{equation}
satisfies $F^*\omega_0=\tilde \omega$. In fact, one has:
\begin{equation}
\begin{split}
||F_j||^2=&\exp\left(-\frac{\sqrt{-1}}{4}w\right)\exp\left(\frac{\sqrt{-1}}{4}\bar w\right)\frac14\sum_{j=0}^\infty\frac{|z|^{2j}}{{4^jj!}}\\
=&\exp\left(-\frac{\sqrt{-1}}{4}(w-\bar w)\right)\exp\left(\frac{|z|^{2}}{{4}}\right)\\
=& \Phi(z,w),
\end{split}\nonumber
\end{equation}
as wished.
\end{proof}

\begin{cor}\label{kodno}
A K\"ahler map $F\colon \mathds{C}^2\to l^2(\mathds{C})$, $F^*\omega_0=\tilde \omega$, does not induce any lcK map $f\colon\mathrm{Heis}(3;\mathds R) \times \mathds R\to l^2(\mathds C)$.
\end{cor}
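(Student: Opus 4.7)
By Proposition~\ref{prop:equivalent-defi}, the corollary reduces to showing that the specific K\"ahler immersion $F$ of Proposition~\ref{lemmma:kodaira-covering} fails to be equivariant under the action of the cocompact lattice $\Gamma \subset \mathrm{Heis}(3;\mathds R)\times\mathds R$ defining the Kodaira surface; equivalently, that $F$ does not descend to an lcK immersion of the quotient into a classical Hopf manifold $H^\infty_\lambda$. By Theorem~\ref{Calabirig}, such an equivariance would have to take the form $F \circ \gamma = \sqrt{\chi(\gamma)}\, U_\gamma \circ F$ for some unitary $U_\gamma$ on $l^2(\mathds C)$, with $\sqrt{\chi(\gamma)}\,U_\gamma$ a power of $\lambda\cdot\mathrm{id}$.

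The plan is to single out $\gamma \in \Gamma$ given by the left translation by a Heisenberg element of the form $(x_0, 0, 0, 0)$ with $x_0 \neq 0$; such a $\gamma$ exists because $\Gamma$ is a cocompact lattice in $\mathrm{Heis}(3;\mathds R)\times\mathds R$. In the holomorphic coordinates $(z, w)$ of the universal cover, $\gamma$ acts by $z \mapsto z + x_0/\sqrt{2}$ together with an explicit affine shift of $w$ in $x, y$ coming from the Heisenberg multiplication. A direct substitution into $F_j = \frac{z^j}{2^j\sqrt{j!}}\exp(-\sqrt{-1}w/4)$ then yields
\[
\frac{F_j(\gamma\cdot p)}{F_j(p)} = \left(1+\frac{x_0}{\sqrt{2}\,z}\right)^{j} \cdot C(x,y,x_0),
\]
where $C$ is a nonvanishing factor independent of $j$. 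The genuine $j$-dependence of this ratio prevents $F\circ\gamma$ from being a scalar multiple of $F$; and since $\chi(\gamma) = 1$ (as the $u$-coordinate of $\gamma$ vanishes), this rules out the required equivariance $F\circ\gamma = \lambda^{k}\,F$ for any $k\in\mathds Z$.

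The main obstacle is handling the freedom permitted by Calabi's rigidity, namely the possibility of a non-scalar unitary $U_\gamma$ intertwining $F$ and $F\circ\gamma$. This is addressed by noting that the immersion in Proposition~\ref{lemmma:kodaira-covering} is full, so $F(\mathds C^2)$ spans $l^2(\mathds C)$; hence any such $U_\gamma$ is uniquely determined by $U_\gamma(F(p)) = F(\gamma\cdot p)$. The explicit $p$-dependence of the computed ratio then forces $U_\gamma$ to act on $F(p)$ in a way that varies with $p$, which contradicts its being a fixed unitary. This yields the desired conclusion that $F$ induces no lcK map $f$ of the type claimed.
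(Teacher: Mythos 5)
Your overall strategy is the same as the paper's: invoke Calabi rigidity to reduce to the one explicit immersion $F$ of Prop.~\ref{lemmma:kodaira-covering}, then exhibit a deck transformation for which the required equivariance fails. Your ratio computation $F_j(\gamma\cdot p)/F_j(p)=\bigl(1+\tfrac{x_0}{\sqrt{2}\,z}\bigr)^{j}C$ is correct, and its genuine $j$-dependence does rule out $F\circ\gamma$ being a \emph{scalar} multiple of $F$ (which, since $|\lambda|\neq 1$ forces the power of $\lambda$ to be $0$ when $\chi(\gamma)=1$, is exactly what descent to $H^\infty_\lambda$ would require).

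The genuine gap is your last paragraph. The claim that the $p$-dependence of the ratio ``forces $U_\gamma$ to act on $F(p)$ in a way that varies with $p$, which contradicts its being a fixed unitary'' is not an argument: every fixed linear operator sends $F(p)$ to something varying with $p$. Worse, the object you are trying to exclude actually \emph{exists}: your $\gamma$ lies in $\ker\chi$, so it preserves $\tilde\omega$, hence $F$ and $F\circ\gamma$ are two full K\"ahler immersions of the same metric and Calabi rigidity (Th.~\ref{Calabirig}, which for the flat target allows unitaries \emph{and} translations, as used in Prop.~\ref{prop:uniqueness}) \emph{guarantees} $F\circ\gamma=U_\gamma\circ F+c_\gamma$ for some unitary $U_\gamma$. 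So no contradiction can come from the mere existence of such an intertwiner. What you actually need is different: descent to $H^\infty_\lambda=(l^2(\mathds C)\setminus\{0\})/\langle\lambda\cdot\mathrm{id}\rangle$ requires the intertwiner of the chosen immersion $G$ to be multiplication by a power of $\lambda$ — a scalar with no translation part — and replacing $F$ by $V\circ F+d$ only conjugates the intertwiner, which cannot make a non-scalar one scalar. Your computation then closes the argument once you also exclude the affine case $F_j(\gamma\cdot p)=\mu F_j(p)+\mathrm{const}$ (you currently ignore the translation $c_\gamma$ altogether). Two smaller points: a cocompact lattice need not contain an element of the exact form $(x_0,0,0,0)$; you should take an arbitrary element of $\Gamma\cap(\mathrm{Heis}(3;\mathds R)\times\{0\})$ whose $(x,y)$-projection is nonzero (these exist since that subgroup projects to a rank-two lattice in $\mathds R^2$), and the same computation applies with $z\mapsto z+a$, $a\neq0$.
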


\begin{proof}
By Calabi's rigidity Theorem (see Th. \ref{Calabirig} above), the K\"ahler map $F$ given by \eqref{kodmap}, is unique up to unitary transformations and translations of $l^2(\mathds C)$.
It follows that a translation $T$ such that $T\circ F$ descends to the quotient does not exist.
\end{proof}

\subsection{The Inoue-Bombieri surface}\label{sec:iowa}
{
We show an example of an lcK manifold whose K\"ahler covering has a local full immersion into $l^2(\mathds C)\setminus\{0\}$, but not admitting an lcK immersion into the Hopf manifold, since the diffeomorphism type is different from Vaisman manifolds.
}

It is given by the {\em Inoue-Bombieri surface} \cite{inoue}. Inoue surfaces are surfaces of class VII ({\itshape i.e.} Kodaira dimension $-\infty$ and first Betti number $1$), with second Betti number equal to zero and with no holomorphic curves. Their universal covering is $\mathds C\times\mathds{H}$, where $\mathds{H}$ denotes the hyperbolic plane.
They are divided into three families: $S_M$, $S^+_{N,p,q,r;\mathbf{t}}$, and $S^-_{N,p,q,r}$.

We consider here the case $S_M$.
More precisely, consider coordinates $(z=x_1+\sqrt{-1}y_1, w=x_2+\sqrt{-1}y_2)\in\mathds C\times\mathds{H}$ with $y_2>0$.
Fix a matrix $M=(M_{jk})_{j,k}\in \mathrm{SL}(3;\mathds{Z})$ with a real eigenvalue $\lambda>1$ and complex non-real eigenvalues $\mu$ and $\bar\mu$; denote by $(\ell_1,\ell_2,\ell_3)$ an eigenvector for $\lambda$, and by $(m_1,m_2,m_3)$ an eigenvector for $\mu$.
Define
$$ S_M := \left. (\mathds C\times\mathds{H}) \middle\slash \Gamma \right. , $$
where $\Gamma=\left\langle f_0,f_1,f_2,f_3 \right\rangle$ is the group generated by
\begin{eqnarray*}
f_0(z,w) &:=& \left( \mu z, \lambda w \right) , \\
f_j(z,w) &:=& \left( z+m_j, w+\ell_j \right) ,
\end{eqnarray*}
varying $j\in\{1,2,3\}$.
The action of $\Gamma$ on $\mathds C\times\mathds{H}$ is fixed-point free and properly discontinuos with compact quotient, so $S_M$ is a compact complex manifold, which has a structure of torus-bundle over $\mathds S^1$.

We endow $S_M$ with the Tricerri metric \cite{tricerri}:
$$ \omega_T := \sqrt{-1} y_2 dz \wedge d \bar z+\frac{\sqrt{-1}}{4 y_2^2} dw\wedge d\bar w, $$
which is an lcK metric on $S_M$ with Lee form $\theta=\frac{d y_2}{y_2}$:
$$ d\omega_T = \frac{d y_2}{y_2} \wedge \omega_T . $$

We lift to the covering $\tilde S_M$ of $S_M$ where $\log y_2$ is defined, so the Lee form $\theta$ becomes globally exact. That is,  we consider
$$ \tilde S_M := \left. ( \mathds C \times \mathds{H} ) \middle\slash \langle f_1, f_2, f_3 \rangle \right. . $$
The deck transformation group of the covering $\tilde S_M \to S_M$ is then identified with the cyclic group $\langle f_0 \rangle$.

Then, on $\tilde S_M$, we have the K\"ahler metric
\begin{eqnarray*}
\tilde \omega_T &:=& \exp(-\log y_2) \omega_T \\
&=& \sqrt{-1} dz \wedge d \bar z+\frac{\sqrt{-1}}{4 y_2^3} dw\wedge d\bar w .
\end{eqnarray*}
The deck transformation group $\langle f_0 \rangle$ acts on $\tilde \omega_T$ by homotheties:
\begin{eqnarray*}
f_0^* \tilde\omega_T &=&
\sqrt{-1} |\mu|^2 dz \wedge d \bar z+\frac{\sqrt{-1}}{4 y_2^3} \frac{1}{\lambda} dw\wedge d\bar w \\
&=& |\mu|^2 \tilde\omega_T ;
\end{eqnarray*}
more precisely, the group of homothety factors is the cyclic group $\mathds{Z} \simeq \langle |\mu|^{2} \rangle$.

For simplicity, to write down the diastasis function for $\tilde\omega_T$, we move to the disc model $\mathds{B}$ instead of half-plane model $\mathds{H}$ for the hyperbolic plane, by the transformation $\mathds{B}\ni w \mapsto \hat w = \frac{w+\sqrt{-1}}{\sqrt{-1}w+1} \in \mathds{H}$ and its inverse $\mathds{H}\ni \hat w \mapsto w = \frac{\hat{w}-\sqrt{-1}}{-\sqrt{-1}\hat{w}+1} \in \mathds{B}$. In these coordinates $\hat w \in \mathds{H}$, we have:
$$ \tilde \omega_T = \sqrt{-1} dz \wedge d \bar z + 4\sqrt{-1}\frac{|1+\sqrt{-1}\hat w|^2}{(1-|\hat w|^2)^3}d\hat w\wedge d\bar{\hat w} . $$ 
A K\"ahler potential for $\tilde \omega_T$, which is also the diastasis function centered at the origin, is given by:
$$
D_0(z,\hat w)=|z|^2+\frac{2(2+\sqrt{-1}(\hat w-\bar{\hat w}))|\hat w|^2}{1-|\hat w|^2} .
$$
The map $F\!:\mathds C\times\mathds B\rightarrow l^2(\mathds C)$ defined by:
\begin{equation}\label{eq:F-Inoue}
F(z,\hat w)=\left(z, \sqrt{2}\hat w, \sqrt{2}(\hat w^2-\sqrt{-1}\hat w),\dots,\sqrt{2}(\hat w^{j+1}-\sqrt{-1}\hat w^j),\dots, \right),
\end{equation}
is a full K\"ahler immersion, as it follows by:
\begin{equation}
\begin{split}
||F_j||^2=&|z|^2+2|\hat w|^2+2\sum_{j=1}^\infty|\hat w^{j+1}-\sqrt{-1}\hat w^j|^2\\
=&|z|^2+2|\hat w|^2+\frac{2|\hat w|^2|\sqrt{-1}-\hat w|^2}{1-|\hat w|^2}\\
=&|z|^2+\frac{2(2+\sqrt{-1}(\hat w-\bar{\hat w}))|\hat w|^2}{1-|\hat w|^2}.
\end{split}\nonumber
\end{equation}
Up to translate the image, we get a local K\"ahler immersion into $l^2(\mathds{C})\setminus\{0\}$.

We turn now at the problem of the existence of an lcK immersion of $S_M$ itself into some Hopf manifold.
{
We notice that this is impossible by Prop. \ref{prop:obstruction}, since the Inoue-Bombieri surface do not admit a proper potential since they are not diffeomorphic to Vaisman manifolds \cite{inoue, belgun}.
}
It follows also by looking at the explicit form of the immersion: the condition $F^*\theta_0=\theta$ (where $\theta_0$ is the Lee form of the Hopf manifold) yields $F^*\|z\|^{-2}=y_2$ up to additive constants; but $F$ is the identity on the first factor $\mathds C \subset \mathds C \times \mathds H$.

\begin{rem}
We ask whether the Inoue-Bombieri surfaces admit an lcK immersion into some other (possibly infinite dimensional) lcK non-Vaisman Hopf manifold.
For example, we ask whether there is any contraction $\varphi$ on $l^2(\mathds C)$ that makes $F$ in \eqref{eq:F-Inoue} equivariant and for which $l^2(\mathds C) / \mathds \langle \varphi \rangle$ is an lcK {manifold} with flat K\"ahler covering.
\end{rem}

\end{document}